\title{Zero-sum continuous-time Markov games with one-side stopping}
\author{Yurii Averboukh}
\date{}
		\email{averboukh@gmail.com}
\begin{document}
	\maketitle
	
\begin{abstract}
	The paper  is concerned with a variant of the continuous-time finite state Markov game of control and stopping where both players can affect  transition rates, while only one player can choose a stopping time. We use the dynamic programming principle and reduce this problem to a system of ODEs with unilateral constraints. This system plays the role of the Bellman equation. We show that its solution provides the optimal strategies of the players. Additionally, we prove the existence and uniqueness theorem for the deduced system of ODEs with unilateral constraints.
    \msccode{91A05, 91A25, 90C40, 60G40, 60J27}
\keywords{continuous-time Markov games, dynamic programming, verification theorem, stopping time}
\end{abstract}
	
\section{Introduction}
The continuous-time Markov processes can be controlled in two ways. First,  the decision maker can affect  transition rates. The second way involves the choice of the stopping time. The case where  the transition rates are controlled refers to the Markov decision problem (see~\cite{Guo_Hernandez_Lerma},~\cite{MDP_survey} and reference therein for the detailed explanation of this field). The optimization of the Markov chain by the means of stopping time is studied within the optimal stopping theory~\cite{Mordecki_stopping, Shiryaev_stopping}. Certainly, one can mix both ways and study Markov decision processes with stopping~\cite{Decision_with_stopping}. The concept of Markov chain affected by several decision makers leads to such settings as the continuous time Markov games those examine the case when all players control transition rates~\cite{Zachrisson1964}, the Dynkin games~\cite{Dynkin1969, Optimal_stopping_zero_sum} where the players choose stopping times and the games of control and stopping where all players can affect the transition rates as well as the stopping time~\cite{Game_with_stopping_control}.     In the paper, we consider one-side stopping Markov game that implies that one player chooses a control and a stopping time, while the other one affects only the control. Our interest to this problem is motivated by the possible application of these games to the differential games with minimal cost studied in~\cite{Barron, Subb_book}. In particular, it is shown  that one can find a one-side stopping Markov game such that its value function approximates the value function of the original differential game with  minimal cost~\cite{Averboukh2021}.

We study the continuous-time finite state Markov chain governed by two decision makers with opposite purposes. Without loss of generality, we assume that the state space is $\mathcal{S}=\{1,\ldots,d\}$. The transition rates are described by the Kolmogorov matrix $Q(t,u,v)=(Q_{i,j}(t,u,v))_{i,j=1}^d$, i.e., for each $t$, $u$ and $v$, \[Q_{i,j}(t,u,v)\geq 0\text{ for }i\neq j,\] while \[\sum_{j\in\mathcal{S}}Q_{i,j}=0.\] Here $u$ (respectively, $v$) is a control of the first (respectively, second) player.  It is assumed that $u\in U$, $v\in V$, while $t\in [0,T]$. We assume that the stopping of the game is chosen by the first player. If $t_0$ is an initial time, $X(t)$ is a state of the Markov chain at time $t$, while $\tau$ is a stopping time, then the quality of the examined one-side Markov game is evaluated by
\begin{equation}\label{payoof:informal:stopping}
	g(\tau,X(\tau))+\int_{t_0}^{\tau}h(t',X(t'),u(t'),v(t'))dt'.
\end{equation} Here $\mathbb{E}$ stands for the expectation. We assume that the first player tries to minimize  functional \eqref{payoof:informal:stopping}, while the second player wishes to maximize it.

The universal tool for the study of  Markov decision processes,  Markov games as well as  optimal stopping problems is the dynamic programming principle. It primary relies on the feedback strategies and  reduces the problem  of design of  optimal strategies to a certain Bellman equation. In the case of the finite horizon Markov decision processes and the Markov games, the Bellman equation takes a form of system of ODEs. For the game theoretical case, this system was derived in  paper by Zachrisson \cite{Zachrisson1964}. Thus, we call it a Zachrisson equation.

In the paper, we use the dynamic programming principle and show the players' optimal strategies can be constructed based on the solution of the Zachrisson equation with unilateral  constraints. Thus, the existence result for the one-side stopping Markov game is reduced to the existence theorem for the Zachrisson equation with unilateral constraints that is also obtained in the paper.

The rest of the paper is organized as follows. In Section~\ref{sect:preliminaries}, we formalize the concept of one-side stopping Markov game and introduce the classes of strategies. The main results are formulated in Section~\ref{sect:main_result}. Here, we, first, claim that, given a solution of the Zachrisson equation with unilateral  constraints, one can design optimal players' strategies. Additionally, in this section, we formulate the existence and uniqueness result for the Zachrisson equation with unilateral constraints. These statements imply the existence of the value function of the examined one-side stopping Markov game. Section~\ref{sect:verification} is concerned with the proof of the first result by the verification argument. The proof of the existence and uniqueness theorem for the Zachrisson equation with unilateral constraints is given in Section~\ref{sect:existence}.

\section{Preliminaries}\label{sect:preliminaries}
In the paper, we identify every function $\phi:\mathcal{S}\rightarrow \mathbb{R}$ with the corresponding vector in $\mathbb{R}^{d}$ and assume that $\phi(i)$ and $\phi_i$ have the same meaning.

Given an initial time $t_0$, we fix the measurable space $(\Omega_{t_0},\mathcal{F}_{t_0})$ letting $\Omega_{t_0}$ to be equal to the space of c\`{a}dl\`{a}g functions from $[t_0,T]$ to $\mathcal{S}$ and choosing $\mathcal{F}_{t_0}$ to be a Borel $\sigma$-algebra on $\Omega_{t_0}$, i.e., $\Omega_{t_0}\triangleq D([t_0,T],\mathcal{S})$, and $\mathcal{F}_{t_0}\triangleq \mathcal{B}(D([t_0,T],\mathcal{S}))$. 

If $s,r\in [t_0,T]$, $s<r$, then denote by $\mathcal{R}^{s,r}$ operator assigning to $\omega\in \Omega_{t_0}=D([t_0,T],\mathcal{S})$ its restriction on $[s,r]$. 

Further, we put $\mathcal{F}_{t_0,t}$ to be the $\sigma$-algebra of sets $A\in\Omega_{t_0}$ such that $\mathcal{R}^{t_0,t}(A)\in\mathcal{B}(D([t_0,t],\mathcal{S}))$ and $\mathcal{R}^{t,T}(A)\in \{\varnothing,D([t,T],\mathcal{S})\}$. Obviously, $\mathbb{F}_{t_0}\triangleq \{\mathcal{F}_{t_0,t}\}_{t\in [t_0,T]}$ is a filtration on $\mathcal{F}_{t_0}$. We denote the set of stopping time w.r.t. the filtration $\mathbb{F}_{t_0}$ taking values in $[t_0,T]$ by $\mathcal{T}_{t_0}$.

If $\phi:[0,T]\times\mathcal{S}\rightarrow \mathbb{R}$ is differentiable at $s\in [0,T]$ for some $i\in \mathcal{S}$, and $\omega\in \Omega_{t_0}$ is such that $\omega(s)=i$, we use the notation 
\[\frac{\partial}{\partial t}\phi(s,\omega(s))\] for the derivative of the function $\phi$ w.r.t. the time variable at the point $s$, $i=\omega(s)$. 

Below, we denote by $\mathcal{AC}$ the set of all functions $\phi:[0,T]\times \mathcal{S}\rightarrow \mathbb{R}$ such that mapping $t\mapsto\phi(t,i)$ is absolutely continuous for each $i\in\mathcal{S}$.

We consider two types of strategies. First, we will assume that the player has information only about current state and time. This leads to the concept of feedback strategies. 

\begin{definition}\label{def:strategy_I} A   measurable w.r.t. the time  mapping $u$ from $[0,T]\times\mathcal{S}$ to $U$ is called a first player's feedback strategy.
\end{definition} We denote the set of all feedback strategies of the first player by $\mathcal{U}_{\operatorname{fb}}$. As it was mentioned above the first player also chooses the stopping time. Within the feedback approach, it is controlled by the stopping rule defined as follows.
\begin{definition}\label{def:stopping_rule}
	A c\'adl\'ag function  from $[0,T]\times\mathcal{S}$ to $\{0,1\}$ is called a stopping rule. 
\end{definition} We will assume that the game stops at the first time when the stopping rule is equal to $1$, i.e., if $\vartheta$ is a stopping rule, then the corresponding stopping time on $[t_0,T]$ is defined by the rule:
\[\bar{\tau}_{t_0}[\vartheta](\omega)\triangleq \min\{t\in [t_0,T]:\vartheta(t,\omega(t))=1\}.\]
The set all stopping rules is denoted by $\mathcal{T}_{\operatorname{fb}}$.  

The feedback strategies of the second player are defined in the same way.
\begin{definition}\label{def:strategy_II}
	We say that a mapping $v:[0,T]\times\mathcal{S}\rightarrow V$ is a second player's strategy provided that $v$ is measurable w.r.t. the time variable.
\end{definition} The set of feedback strategies is denoted by $\mathcal{V}_{\operatorname{fb}}$.

Since we consider the guaranteed results of the players, it is reasonable to follow the approach of~\cite{NN_PDG} and enlarge of the set of strategies admitting strategies with memory. 

\begin{definition}\label{def:progressively_strategies} If $t_0$ is an initial time, a $\mathbb{F}_{t_0}$-progressively measurable process with values in $U$ (respectively, $V$) is called a strategy with memory of the first (respectively, second) player on $[t_0,T]$.  
\end{definition} We denote the set of strategies with memory of the first (respectively, second) player on $[t_0,T]$ by $\mathcal{U}_{t_0}$ (respectively, $\mathcal{V}_{t_0}$).

The link between feedback strategies and strategies with memory is straightforward. If $u\in\mathcal{U}_{\operatorname{fb}}$, then $\alpha[u]$ defined by the rule \[\alpha[u](t,\omega)\triangleq u(t,\omega(t))\] is a strategy with memory. Analogously, given $v\in\mathcal{V}_{\operatorname{fb}}$, we define the corresponding strategy with memory $\beta[v]$ setting $\beta[v](t,\omega)\triangleq v(t,\omega(t))$.

The probability on $\mathcal{F}_{t_0}$ generated by two strategies, initial time and initial state is defined in the following way.
\begin{definition}\label{def:probability}
Given two strategies with memory $\alpha\in\mathcal{U}_{t_0}$ and $\beta\in\mathcal{V}_{t_0}$, an initial time $t_0\in [0,T]$ and an initial state $i_0\in\mathcal{S}$, we say that the probability $\mathbb{P}^{\alpha,\beta}_{t_0,i_0}$ on $\mathcal{F}_{t_0}$  is generated by $\alpha$, $\beta$, $t_0$ and $i_0$ provided that
\begin{itemize}
	\item $\mathbb{P}_{t_0,i_0}^{\alpha,\beta}(\{\omega:\omega(t_0)=i_0\})=1$.
\item
 for every $\phi\in\mathcal{AC}$, the process
\begin{equation}\label{proc:phi}
	\phi(t,\omega(t))-\int_{t_0}^t\bigg[\frac{\partial}{\partial t}\phi(t',\omega(t'))+\sum_{j\in\mathcal{S}}Q_{\omega(t'),j}(t',\alpha(t',\omega),\beta(t',\omega))\phi(t',j)\bigg]dt'
\end{equation} is $(\mathbb{P}_{t_0,i_0}^{\alpha,\beta},\mathbb{F})$-martingale.
\end{itemize}
\end{definition} Below $\mathbb{E}_{t_0,i_0}^{\alpha,\beta}$ stands for the expectation corresponding to the probability $\mathbb{P}_{t_0,i_0}^{\alpha,\beta}$.

 If $\alpha=\alpha[u]$ for some feedback strategy of the first player $u$, we will write 
$\mathbb{P}_{t_0,i_0}^{u,\beta}$ instead of $\mathbb{P}_{t_0,i_0}^{\alpha[u],\beta}$. Analogously, $\mathbb{P}_{t_0,i_0}^{\alpha,v}\triangleq\mathbb{P}_{t_0,i_0}^{\alpha,\beta[v]}$. Notice that the probability $\mathbb{P}^{u,v}_{t_0,i_0}$ is always well-defined. Additionally, one can prove that $\mathbb{P}_{t_0,i_0}^{\alpha,\beta}$ exists in the case when both $\alpha$ and $\beta$ depend on a finite number of time instants.

The quality of the players' strategies is evaluated as follows. Given initial time $t_0\in [0,T]$, initial state $i_0\in\mathcal{S}$, a first player's strategy $\alpha\in\mathcal{U}_{t_0}$, a stopping time $\tau\in \mathcal{T}_{t_0}$ and a strategy of the second player $\beta\in\mathcal{V}_{t_0}$, the corresponding payoff within the one-side stopping Markov game is equal to
\[\mathbb{E}_{t_0,i_0}^{\alpha,\beta}G_{t_0}^{\alpha,\beta}(\tau),\] where we denote
\[G_{t_0}^{\alpha,\beta}(t,\omega)\triangleq g(t,\omega(t))+\int_{t_0}^t h(t',\omega(t'),\alpha(t',\omega),\beta(t',\omega))dt'.\] As above, in the case when $\alpha=\alpha[u]$ (respectively, $\beta=\beta[v]$), we will write $G_{t_0}^{u,\beta}$  (respectively, $G_{t_0}^{\alpha,v}$).

\section{Main result}\label{sect:main_result}

We impose the following assumptions:
\begin{enumerate}[label=(H\arabic*)]
	\item\label{assumption:compact} The control spaces $U$ and $V$ are compact;
	\item\label{assumption:continuous} the transition rates $Q_{i,j}$ and the payoff functions $g(\cdot,i)$, $h(\cdot,i,\cdot,\cdot)$ are continuous for each $i$ and $j$;
	\item\label{assumption:g_Lipschit} the function $g$ is Lipschitz continuous w.r.t. the time variable;
	\item\label{assumption:Isaacs} for every $t\in [0,T]$, $i\in\mathcal{S}$ and each $w\in\rd$,
	\[\begin{split}\min_{u\in U}\max_{v\in V}\sum_{j\in\mathcal{S}}&{}\Bigl[Q_{i,j}(t,u,v)w_j+h(t,i,u,v)\Bigr]\\&=\max_{v\in V}\min_{u\in U}\sum_{j\in\mathcal{S}}\Bigl[Q_{i,j}(t,u,v)w_j+h(t,i,u,v)\Bigr].\end{split}\]
\end{enumerate} Notice that assumption~\ref{assumption:Isaacs} can be considered as an analog of the Isaacs' condition in the theory of differential games \cite{NN_PDG}.

We  consider the upper and lower values of the game those corresponds to the upper and lower games. We assume that within the upper game the first player chooses his/her strategy (that includes both controls and stopping time) from the class of feedback strategies, while the response of the second player is from the class of strategies with memory. Formally, this  means the advantage of the second player. In the lower game, we give the advantage to the first player. The second player's choice is restricted to the class of feedback strategies, while the first player chooses his/her policy from the class of strategies with memory. 

Therefore, the upper and lower values of the one-side stopping Markov game are defined by the rules:
\[\operatorname{Val}^+(t_0,i_0)\triangleq \inf_{u\in \mathcal{U}_{\operatorname{fb}}}\inf_{\vartheta\in\mathcal{T}_{\operatorname{fb}}}\sup_{\beta\in \mathcal{V}_{t_0}}\mathbb{E}_{t_0,i_0}^{u,\beta}G_{t_0}^{u,\beta}(\bar{\tau}_{t_0}[\vartheta]),\]
\[\operatorname{Val}^-(t_0,i_0)\triangleq \sup_{v\in \mathcal{V}_{\operatorname{fb}}}\inf_{\alpha\in \mathcal{U}_{t_0}}\inf_{\tau\in\mathcal{T}_{t_0}}\mathbb{E}_{t_0,i_0}^{\alpha,v}G_{t_0}^{\alpha,v}(\tau).\]

Notice that due to the fact that, for each $t_0\in [0,T]$, $i_0\in\mathcal{S}$, $u\in\mathcal{U}_{\operatorname{fb}}$, $v\in \mathcal{V}_{\operatorname{fb}}$, the probability $\mathbb{P}^{u,v}_{t_0,i_0}$ is well-defined, we have that all aforementioned values are finite. 

Moreover, 
\begin{equation}\label{ineq:Values_upper_lower_st}
	\operatorname{Val}^+(t_0,i_0)\geq \operatorname{Val}^-(t_0,i_0).
\end{equation} 

Below, we show that the one-side stopping Markov games has a value. Additionally, we will show that this value can be obtained as a solution of the Zachrisson  equation with unilateral constraints. The latter object is defined as follows. For $w\in\rd$, $i\in \mathcal{S}$,  set
\begin{equation}\label{intro:Hamiltonian}H_i(t,w)\triangleq \min_{u\in U}\max_{v\in V}\sum_{j\in\mathcal{S}}\Bigl[Q_{i,j}(t,u,v)w_j+h(t,i,u,v)\Bigr].\end{equation} Notice that, due to assumption~\ref{assumption:Isaacs}, 
\[
	H_i(t,w)= \max_{v\in V}\min_{u\in U}\sum_{j\in\mathcal{S}}\Bigl[Q_{i,j}(t,u,v)w_j+h(t,i,u,v)\Bigr].\] Below, if $\phi:[0,T]\times\mathcal{S}\rightarrow\mathbb{R}$, the $\phi(t)$ stands for the vector $(\phi(t,1),\ldots,\phi(t,d))^T$.

\begin{definition}\label{def:Zachrisson} We say that a  function $\varphi\in\mathcal{AC}$ satisfies the Zachrisson equation with unilateral  constraints if, for each $i\in\mathcal{S}$,
	\begin{enumerate}[label=(Z\arabic*)]
		\item\label{ZE:boundary} $\varphi(T,i)=g(T,i)$;
		\item\label{ZE:upper_bound} $\varphi(t,i)\leq g(t,i)$ for $t\in [0,T]$;
		\item\label{ZE:u_stability} for every point $t\in [0,T]$ such that $\frac{d}{dt}\varphi(t,i)$ is well-defined and $\varphi(t,i)<g(t,i)$, one has
		\[\frac{d}{dt}\phi(t,i)\leq -H_i(t,\varphi(t));\]
		\item\label{ZE:v_stability} if  $\phi(\cdot,i)$ is differentiable at $t\in [0,T]$, then
		\[\frac{d}{dt}\phi(t,i)\geq -H_i(t,\varphi(t)).\]
	\end{enumerate}
\end{definition}
\begin{theorem}\label{th:verification}
	Let $\varphi$ satisfy the Zachrisson equation with unilateral constraints. Then at every $t_0\in [0,T]$, $i_0\in\mathcal{S}$,
	\[\operatorname{Val}^+(t_0,i_0)= \operatorname{Val}^-(t_0,i_0)=\varphi(t_0,i_0).\]
\end{theorem}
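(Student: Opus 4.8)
\emph{Proof plan.} The plan is to combine the elementary inequality \eqref{ineq:Values_upper_lower_st} with two one-sided verification estimates, $\operatorname{Val}^+(t_0,i_0)\le\varphi(t_0,i_0)$ and $\operatorname{Val}^-(t_0,i_0)\ge\varphi(t_0,i_0)$; together they squeeze all three quantities onto $\varphi(t_0,i_0)$. For the first estimate I would construct an explicit feedback strategy together with a stopping rule for the minimizing first player and bound the payoff against an arbitrary $\beta\in\mathcal V_{t_0}$; for the second, symmetrically, an explicit feedback strategy for the maximizing second player, bounding the payoff from below against arbitrary $\alpha\in\mathcal U_{t_0}$ and $\tau\in\mathcal T_{t_0}$.

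\textbf{Upper bound.} Using \ref{assumption:compact}--\ref{assumption:Isaacs} and a measurable selection theorem I would fix $u^\ast\in\mathcal U_{\operatorname{fb}}$ for which $u^\ast(t,i)$ realizes the outer minimum in \eqref{intro:Hamiltonian} at $w=\varphi(t)$, so that for every $v\in V$
\[\sum_{j\in\mathcal S}\bigl[Q_{i,j}(t,u^\ast(t,i),v)\varphi(t,j)+h(t,i,u^\ast(t,i),v)\bigr]\le H_i(t,\varphi(t)),\]
and I would take the stopping rule $\vartheta^\ast$ to fire on the contact set, $\vartheta^\ast(t,i)=1$ iff $\varphi(t,i)=g(t,i)$. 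By \ref{ZE:boundary} this set contains the whole slice $\{t=T\}$, so the induced time $\bar\tau^\ast\triangleq\bar\tau_{t_0}[\vartheta^\ast]$ satisfies $\bar\tau^\ast\le T$, and by \ref{ZE:upper_bound} along every trajectory $\varphi(\cdot,\omega(\cdot))<g(\cdot,\omega(\cdot))$ on $[t_0,\bar\tau^\ast)$ and $\varphi(\bar\tau^\ast,\omega(\bar\tau^\ast))=g(\bar\tau^\ast,\omega(\bar\tau^\ast))$. Then, for arbitrary $\beta\in\mathcal V_{t_0}$, I would start from the martingale \eqref{proc:phi} for $\varphi\in\mathcal{AC}$ under $\mathbb{P}^{u^\ast,\beta}_{t_0,i_0}$ (Definition~\ref{def:probability}) and add the running cost, which gives that
\[M_t\triangleq\varphi(t,\omega(t))+\int_{t_0}^{t}h(t',\omega(t'),u^\ast,\beta)\,dt'\]
equals a $\mathbb{P}^{u^\ast,\beta}_{t_0,i_0}$-martingale plus the absolutely continuous process $t\mapsto\int_{t_0}^t\psi(t')\,dt'$ with integrand
\[\psi(t')=\frac{\partial}{\partial t}\varphi(t',\omega(t'))+\sum_{j\in\mathcal S}Q_{\omega(t'),j}(t',u^\ast,\beta)\varphi(t',j)+h(t',\omega(t'),u^\ast,\beta).\]
On $[t_0,\bar\tau^\ast)$ the displayed bound on the $Q$-term together with \ref{ZE:u_stability} forces $\psi\le 0$ for a.e.\ time, so $t\mapsto M_{t\wedge\bar\tau^\ast}$ is a supermartingale; optional stopping (legitimate since $\bar\tau^\ast\le T$) and $\omega(t_0)=i_0$ a.s.\ yield $\mathbb{E}^{u^\ast,\beta}_{t_0,i_0}M_{\bar\tau^\ast}\le\varphi(t_0,i_0)$. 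Since $\varphi=g$ at $\bar\tau^\ast$, one has $M_{\bar\tau^\ast}=G_{t_0}^{u^\ast,\beta}(\bar\tau^\ast)$, whence $\sup_{\beta}\mathbb{E}^{u^\ast,\beta}_{t_0,i_0}G_{t_0}^{u^\ast,\beta}(\bar\tau^\ast)\le\varphi(t_0,i_0)$, i.e.\ $\operatorname{Val}^+(t_0,i_0)\le\varphi(t_0,i_0)$.

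\textbf{Lower bound.} This is the mirror argument. Using the max--min form of $H_i$ available by \ref{assumption:Isaacs} and measurable selection, I would fix $v^\ast\in\mathcal V_{\operatorname{fb}}$ with $v^\ast(t,i)$ realizing the outer maximum, so that $\sum_{j\in\mathcal S}[Q_{i,j}(t,u,v^\ast(t,i))\varphi(t,j)+h(t,i,u,v^\ast(t,i))]\ge H_i(t,\varphi(t))$ for all $u\in U$. For arbitrary $\alpha\in\mathcal U_{t_0}$, $\tau\in\mathcal T_{t_0}$, the same decomposition of $M_t\triangleq\varphi(t,\omega(t))+\int_{t_0}^t h(t',\omega(t'),\alpha,v^\ast)\,dt'$ now has integrand $\psi\ge 0$ for a.e.\ time by this inequality and \ref{ZE:v_stability}; hence $M_{t\wedge\tau}$ is a submartingale, so $\mathbb{E}^{\alpha,v^\ast}_{t_0,i_0}M_\tau\ge\varphi(t_0,i_0)$, and since $\varphi(\tau,\omega(\tau))\le g(\tau,\omega(\tau))$ by \ref{ZE:upper_bound} this gives $\mathbb{E}^{\alpha,v^\ast}_{t_0,i_0}G_{t_0}^{\alpha,v^\ast}(\tau)\ge\varphi(t_0,i_0)$ for all $\alpha$ and $\tau$, i.e.\ $\operatorname{Val}^-(t_0,i_0)\ge\varphi(t_0,i_0)$. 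Note that no stopping rule has to be built here, since the estimate is uniform over $\mathcal T_{t_0}$.

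\textbf{Expected obstacles.} The routine items are the measurable-in-$t$ selection of $u^\ast$, $v^\ast$ from the argmin/argmax multifunctions of the Hamiltonian (Berge's maximum theorem and a measurable selection lemma, using \ref{assumption:compact}, \ref{assumption:continuous}) and the fact that \ref{ZE:u_stability}, \ref{ZE:v_stability} may fail on a fixed Lebesgue-null set of times for each state, which is immaterial inside the time integrals because the chain has a.s.\ finitely many jumps on $[t_0,T]$. I expect the one genuinely delicate point to be the legitimacy of $\vartheta^\ast$: one must verify that the contact-set indicator, or a suitable admissible modification of it, is a stopping rule in the sense of Definition~\ref{def:stopping_rule} and that the induced $\bar\tau^\ast$ is exactly the first time the trajectory meets $\{\varphi=g\}$. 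It is precisely here that \ref{ZE:boundary} (ensuring $\bar\tau^\ast\le T$, so that optional stopping and the terminal collapse apply) and \ref{ZE:upper_bound} (ensuring $\varphi<g$ strictly before $\bar\tau^\ast$ and the identity $M_{\bar\tau^\ast}=G_{t_0}^{u^\ast,\beta}(\bar\tau^\ast)$) are used, and it is the step deserving the most care.
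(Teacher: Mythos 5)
Your proposal is correct and follows essentially the same route as the paper: the upper and lower bounds you describe are exactly the paper's Propositions~\ref{prop:u_stable} and~\ref{prop:v_suboptimal}, built on the same argmin feedback strategy with contact-set stopping rule, the same max--min feedback strategy for the second player, and the same martingale/optional-sampling decomposition, combined with inequality~\eqref{ineq:Values_upper_lower_st}. Your supermartingale/submartingale phrasing and the explicit attention to measurable selection and to the admissibility of the stopping rule are only presentational refinements of the argument in the paper.
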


\begin{theorem}\label{th:existence} There exists a unique function $\varphi$ satisfying the Zachrisson equation with unilateral constraints.
\end{theorem}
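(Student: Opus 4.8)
The plan is to recast the Zachrisson equation with unilateral constraints as an obstacle problem for a system of ODEs and solve it by a Banach fixed‑point argument built on the explicit solution of the \emph{scalar} obstacle problem; uniqueness will drop out of the same construction. It is convenient to reverse time: put $s=T-t$ and $\tilde\varphi(s,i)=\varphi(T-s,i)$, $\tilde g(s,i)=g(T-s,i)$, $\tilde H_i(s,w)=H_i(T-s,w)$. Then \ref{ZE:boundary}--\ref{ZE:v_stability} are equivalent to the forward‑in‑$s$ problem
\[
\tilde\varphi(0,i)=g(T,i),\qquad \tilde\varphi(s,i)\le \tilde g(s,i),\qquad \frac{d}{ds}\tilde\varphi(s,i)\le \tilde H_i(s,\tilde\varphi(s))\ \text{a.e.},
\]
the last inequality being an equality a.e.\ on $\{s:\tilde\varphi(s,i)<\tilde g(s,i)\}$. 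Two preliminary observations. First, by \ref{assumption:compact}--\ref{assumption:continuous} the quantity $M\triangleq\sup\sum_j|Q_{i,j}(t,u,v)|$ is finite, and since every map $w\mapsto\sum_jQ_{i,j}(t,u,v)w_j+h(t,i,u,v)$ is $M$‑Lipschitz in the sup‑norm, so is $w\mapsto H_i(t,w)$, uniformly in $t$ and $i$; note that only the $\min_u\max_v$ form of $H_i$ enters here, so \ref{assumption:Isaacs} is not needed for this theorem. Second, any $\varphi$ satisfying \ref{ZE:boundary}--\ref{ZE:v_stability} is automatically bounded (combine \ref{ZE:boundary}, \ref{ZE:v_stability} and $|H_i(t,w)|\le M\|w\|+C$ in a backward Gronwall estimate) and hence Lipschitz in $t$, since a.e.\ $\frac{d}{dt}\varphi(t,i)$ equals $-H_i(t,\varphi(t))$ where $\varphi(\cdot,i)<g(\cdot,i)$ and $\frac{d}{dt}g(t,i)$ on the contact set, both bounded (the latter by \ref{assumption:g_Lipschit}); in particular $\varphi\in\mathcal{AC}$.

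The key lemma is: for a continuous $f:[0,T]\to\mathbb R$ and the Lipschitz obstacle $\tilde g(\cdot,i)$, the scalar problem of finding a Lipschitz $y$ with $y(0)=\tilde g(0,i)$, $y\le\tilde g(\cdot,i)$, $y'\le f$ a.e., and $y'=f$ a.e.\ on $\{y<\tilde g(\cdot,i)\}$ has the unique solution
\[
y(s)=\inf_{0\le\sigma\le s}\Bigl[\tilde g(\sigma,i)+\int_\sigma^s f(r)\,dr\Bigr].
\]
That this $y$ solves the problem is a direct check: it is Lipschitz with constant $\|f\|_\infty+\mathrm{Lip}(\tilde g(\cdot,i))$; $y\le\tilde g(\cdot,i)$ and $y(0)=\tilde g(0,i)$ follow by taking $\sigma=s$ resp.\ $\sigma=0$; where $y(s)<\tilde g(s,i)$ the infimum is attained at points $\sigma$ bounded away from $s$, and since $\int_s^{s+h}f$ is independent of $\sigma$ it factors out of the infimum, giving $y'(s)=f(s)$ there; and on the contact set one has $\tilde g(s,i)-\tilde g(\sigma,i)\le\int_\sigma^s f$ for $\sigma<s$, so dividing by $s-\sigma$ and letting $\sigma\uparrow s$ yields $\frac{d}{ds}\tilde g(s,i)\le f(s)$ at a.e.\ contact point, where moreover $y'(s)=\frac{d}{ds}\tilde g(s,i)$ because $\tilde g(\cdot,i)-y\ge0$ attains its global minimum there. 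Uniqueness of $y$ follows by a one‑sided comparison: if $y_1,y_2$ both solve the problem, then on $\{y_1>y_2\}$ one has $y_2<\tilde g(\cdot,i)$, hence $y_2'=f\ge y_1'$ a.e.\ there, so $(y_1-y_2)^+$ is nonincreasing wherever positive and vanishes at $s=0$, whence $y_1\le y_2$; by symmetry $y_1=y_2$. Finally, the formula gives $|y[f_1](s)-y[f_2](s)|\le\int_0^s|f_1-f_2|$.

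Now define, for $\psi\in C([0,T];\rd)$, the map $\Lambda$ by letting $(\Lambda\psi)(\cdot,i)$ be the solution of the scalar obstacle problem with obstacle $\tilde g(\cdot,i)$ and forcing $r\mapsto\tilde H_i(r,\psi(r))$. By the lemma, $\Lambda$ sends $C([0,T];\rd)$ into itself and $\|(\Lambda\psi_1)(s)-(\Lambda\psi_2)(s)\|\le M\int_0^s\|\psi_1(r)-\psi_2(r)\|\,dr$, so $\Lambda$ is a contraction in the equivalent norm $\|\psi\|_*\triangleq\sup_s e^{-2Ms}\|\psi(s)\|$ and has a unique fixed point $\tilde\varphi$. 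Component by component, $\tilde\varphi(\cdot,i)$ then solves the scalar obstacle problem with forcing $\tilde H_i(\cdot,\tilde\varphi(\cdot))$, i.e.\ the reversed \ref{ZE:boundary}--\ref{ZE:v_stability} hold; reversing time produces a solution $\varphi$ of the Zachrisson equation with unilateral constraints. Conversely, if $\varphi$ is any solution, then in reversed time each $\tilde\varphi(\cdot,i)$ solves the scalar obstacle problem with the fixed continuous forcing $\tilde H_i(\cdot,\tilde\varphi(\cdot))$, so by the uniqueness part of the lemma $\tilde\varphi=\Lambda\tilde\varphi$; as $\Lambda$ has only one fixed point, $\varphi$ is unique. (Alternatively the fixed‑point step can be replaced by vanishing penalization: solve $\frac{d}{ds}\tilde\varphi^\varepsilon(s,i)=\tilde H_i(s,\tilde\varphi^\varepsilon(s))-\frac1\varepsilon(\tilde\varphi^\varepsilon(s,i)-\tilde g(s,i))^+$, bound the penalty uniformly in $\varepsilon$ via a differential inequality for $\max_i(\tilde\varphi^\varepsilon(s,i)-\tilde g(s,i))$ that again relies on \ref{assumption:g_Lipschit}, and pass to a uniform limit.)

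The step I expect to be the main obstacle is the lemma, and within it the verification that the inf‑formula satisfies $y'\le f$ on the contact set: this is precisely where Lipschitz continuity of the obstacle — assumption \ref{assumption:g_Lipschit} — is indispensable, since without it the solution of the obstacle problem need not even be Lipschitz and the passage $\frac{\tilde g(s,i)-\tilde g(\sigma,i)}{s-\sigma}\to\frac{d}{ds}\tilde g(s,i)$ fails. A minor technical point is the choice of the time‑weighted norm $\|\cdot\|_*$, which is what makes $\Lambda$ a contraction on the whole interval $[0,T]$ at once rather than only on short subintervals.
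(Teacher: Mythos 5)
Your proof is correct, and it takes a genuinely different route from the paper's. The paper constructs approximations $\varphi_N$ by a backward time-stepping scheme: on each subinterval $[t_N^k,t_N^{k+1}]$ it solves the unconstrained Zachrisson ODE $\frac{d}{dt}\psi_N^k(t,i)=-H_i(t,\psi_N^k(t))$ and then truncates by the obstacle, $\varphi_N=\psi_N^k\wedge g$; it proves uniform boundedness and equi-Lipschitz continuity of the $\varphi_N$, extracts a limit by the Arzel\`a--Ascoli theorem, and verifies \ref{ZE:boundary}--\ref{ZE:v_stability} by passing to the limit in integrated one-sided inequalities (Lemmas~\ref{lm:u_stability_N} and~\ref{lm:v_stability_N}). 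Crucially, the paper obtains \emph{uniqueness} not analytically but as a corollary of the verification theorem (Theorem~\ref{th:verification}): any solution must coincide with the value of the game. You instead replace compactness by an explicit running-infimum representation of the scalar obstacle problem together with a Banach fixed point in a time-weighted norm, and you get uniqueness from a comparison principle for the scalar problem plus uniqueness of the fixed point. What your route buys is a self-contained, purely ODE-theoretic proof in which Theorem~\ref{th:existence} no longer depends on Theorem~\ref{th:verification}, plus an explicit formula for the solution; your observation that \ref{assumption:Isaacs} is not needed here is accurate and applies equally to the paper's argument. What the paper's scheme buys is a discrete dynamic-programming interpretation of the approximants. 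Both proofs use \ref{assumption:g_Lipschit} in the same essential place — to control the solution on the contact set (your passage $\bigl(\tilde g(s,i)-\tilde g(\sigma,i)\bigr)/(s-\sigma)\to\frac{d}{ds}\tilde g(s,i)$ corresponds to the paper's Lemma~\ref{lm:equicontinuous}). The one spot worth tightening in your write-up is the verification of the scalar lemma: the cleanest phrasing is that $y(s)-\int_0^s f$ equals the running infimum of $\sigma\mapsto\tilde g(\sigma,i)-\int_0^{\sigma}f$, hence is nonincreasing everywhere (giving $y'\le f$) and locally constant wherever $y<\tilde g(\cdot,i)$ (giving $y'=f$ there); this is what you sketch, so it is a presentational point, not a gap.
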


These two theorems obviously yield the following.
\begin{corollary}\label{corr:value} There exists the value of the one-side stopping Markov game.
\end{corollary}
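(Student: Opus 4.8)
The plan is to deduce the corollary directly by chaining the two preceding theorems; no independent argument about the game itself is required. First I would invoke Theorem~\ref{th:existence} to obtain a function $\varphi\in\mathcal{AC}$ that satisfies the Zachrisson equation with unilateral constraints. Since that theorem asserts existence, such a $\varphi$ is available without any further construction on my part.

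Next I would apply Theorem~\ref{th:verification} to this particular $\varphi$. That theorem asserts that for every $t_0\in[0,T]$ and $i_0\in\mathcal{S}$ one has
\[\operatorname{Val}^+(t_0,i_0)=\operatorname{Val}^-(t_0,i_0)=\varphi(t_0,i_0).\]
In particular, the upper and lower values coincide at every initial position $(t_0,i_0)$.

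Finally, recalling that the existence of the value of the one-side stopping Markov game means precisely that $\operatorname{Val}^+(t_0,i_0)=\operatorname{Val}^-(t_0,i_0)$ for all $(t_0,i_0)$, the displayed equality delivers exactly this, and the common value is moreover given explicitly by $\varphi(t_0,i_0)$. Together with the a~priori inequality~\eqref{ineq:Values_upper_lower_st}, one even sees that the two values are squeezed to the same number, which is consistent.

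There is no genuine obstacle at this stage: all the analytic difficulty — the verification argument and the existence/uniqueness theory for the constrained ODE system — has already been absorbed into Theorems~\ref{th:verification} and~\ref{th:existence}. The only logical point worth flagging is that Theorem~\ref{th:verification} takes a solution of the Zachrisson equation as a \emph{hypothesis}, so the existence half of Theorem~\ref{th:existence} is precisely what renders that hypothesis non-vacuous and makes the deduction legitimate; the uniqueness half, while it confirms that the value function is determined unambiguously, is not strictly needed for the corollary itself.
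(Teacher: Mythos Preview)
Your argument is correct and matches the paper's own treatment: the corollary is stated as an immediate consequence of Theorems~\ref{th:verification} and~\ref{th:existence}, exactly the chaining you describe. Your remark that only the existence half of Theorem~\ref{th:existence} is needed here is also accurate.
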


Theorem~\ref{th:verification} and~\ref{th:existence} are proved in Sections~\ref{sect:verification} and~\ref{sect:existence} respectively.

\section{Suboptimal strategies}\label{sect:verification}
To prove Theorem~\ref{th:verification}, we consider  suboptimal strategies of the players. Let us start with the construction of suboptimal strategy of the first player in the one-side stopping Markov game. 

Given a function $\phi\in\mathcal{AC}$, we define a strategy $\hat{u}_\phi\in\mathcal{U}_{\operatorname{fb}}$ and a stopping rule $\hat{\vartheta}_{\phi,t_0}\in\mathcal{T}_{\operatorname{fb}}$ by the rules:
	\begin{equation}\label{intro:u_suboptima}
	\hat{u}_\phi(t,i)\in \underset{u\in U}{\operatorname{Argmin}}\Bigl[\sum_{j\in\mathcal{S}}Q_{i,j}(t,u,v)\varphi(t,j)+h(t,i,u,v)\Bigr],
\end{equation}
\[\hat{\vartheta}_{\phi}(t,i)=\left\{\begin{array}{cc}
1, & \phi(t,i)=g(t,i), \\
0, & \phi(t,i)\neq g(t,i).
\end{array}\right.\] The latter corresponds to the following stopping time with values on the time interval $[t_0,T]$
\begin{equation}\label{intro:tau_suboptimal}
	\hat{\tau}_{\phi,t_0}(\omega)=\bar{\tau}_{t_0}[\hat{\vartheta}_{\phi}](\omega)=\min\{t\in [t_0,T]:\phi(t,\omega(t))=g(t,\omega(t))\}.
\end{equation}

\begin{proposition}\label{prop:u_stable} Assume that $\varphi$ satisfies conditions~\ref{ZE:boundary},~\ref{ZE:u_stability} of Definition~\ref{def:Zachrisson}. Then, for every $t_0\in [0,T]$, $i_0\in\mathcal{S}$ and each $\beta\in \mathcal{U}_{t_0}$, the following inequality holds true:
\begin{equation}\label{prop_ustable:ineq:value}
	\mathbb{E}^{\hat{u}_\varphi,\beta}_{t_0,i_0}G_{t_0}^{\hat{u}_\varphi,\beta}(\hat{\tau}_{\varphi,t_0})\leq \varphi(t_0,i_0)
\end{equation}
\end{proposition}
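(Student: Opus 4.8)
Fix $t_0\in[0,T]$, $i_0\in\mathcal{S}$ and an arbitrary second player's strategy with memory $\beta$; set $\hat{\tau}\triangleq\hat{\tau}_{\varphi,t_0}$, $\mathbb{P}\triangleq\mathbb{P}^{\hat{u}_\varphi,\beta}_{t_0,i_0}$ and $\mathbb{E}\triangleq\mathbb{E}^{\hat{u}_\varphi,\beta}_{t_0,i_0}$. The plan is to prove that the stopped process
\[
M(t)\triangleq\varphi\bigl(t\wedge\hat{\tau},\omega(t\wedge\hat{\tau})\bigr)+\int_{t_0}^{t\wedge\hat{\tau}}h\bigl(t',\omega(t'),\hat{u}_\varphi(t',\omega(t')),\beta(t',\omega)\bigr)\,dt',\qquad t\in[t_0,T],
\]
is a $(\mathbb{P},\mathbb{F})$-supermartingale. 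Granting this, the proposition follows by inspecting the endpoints. First, $M(t_0)=\varphi(t_0,i_0)$ because $\omega(t_0)=i_0$ $\mathbb{P}$-a.s. Second, $M(T)=M(\hat{\tau})$ (as $\hat{\tau}\le T$), and $M(\hat{\tau})=G_{t_0}^{\hat{u}_\varphi,\beta}(\hat{\tau},\omega)$ because $\varphi(\hat{\tau},\omega(\hat{\tau}))=g(\hat{\tau},\omega(\hat{\tau}))$: the set $\{t\in[t_0,T]:\varphi(t,\omega(t))=g(t,\omega(t))\}$ is nonempty by~\ref{ZE:boundary}, and since $t\mapsto\varphi(t,i)-g(t,i)$ is continuous for every $i$ while $\omega$ is right-continuous into the discrete set $\mathcal{S}$, its minimum $\hat{\tau}$ is attained. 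The supermartingale inequality then gives $\mathbb{E}\,G_{t_0}^{\hat{u}_\varphi,\beta}(\hat{\tau})=\mathbb{E}\,M(T)\le M(t_0)=\varphi(t_0,i_0)$, i.e.~\eqref{prop_ustable:ineq:value}.

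For the supermartingale property I would use the martingale characterization of $\mathbb{P}$ from Definition~\ref{def:probability} with $\phi=\varphi$ (recall $\varphi\in\mathcal{AC}$) and $\alpha=\alpha[\hat{u}_\varphi]$: it makes
\[
N(t)\triangleq\varphi(t,\omega(t))-\int_{t_0}^t\Bigl[\frac{\partial}{\partial t}\varphi(t',\omega(t'))+\sum_{j\in\mathcal{S}}Q_{\omega(t'),j}\bigl(t',\hat{u}_\varphi(t',\omega(t')),\beta(t',\omega)\bigr)\varphi(t',j)\Bigr]dt'
\]
a $(\mathbb{P},\mathbb{F})$-martingale. By~\ref{assumption:compact} and~\ref{assumption:continuous} the functions $\varphi$, $Q$ and $h$ are bounded on the relevant compact sets, so $N$ is bounded; hence optional stopping keeps $N(\cdot\wedge\hat{\tau})$ a martingale. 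Writing $M(t)=N(t\wedge\hat{\tau})+A(t\wedge\hat{\tau})$, where $A(s)$ denotes the integral over $[t_0,s]$ of
\[
\frac{\partial}{\partial t}\varphi(t',\omega(t'))+\sum_{j\in\mathcal{S}}Q_{\omega(t'),j}\bigl(t',\hat{u}_\varphi(t',\omega(t')),\beta(t',\omega)\bigr)\varphi(t',j)+h\bigl(t',\omega(t'),\hat{u}_\varphi(t',\omega(t')),\beta(t',\omega)\bigr),
\]
it suffices to show that $s\mapsto A(s)$ is non-increasing on $[t_0,\hat{\tau}]$, that is, that the displayed integrand is nonpositive for Lebesgue-a.e.\ $t'\in[t_0,\hat{\tau}]$.

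I would obtain this bound pointwise. Take $t'<\hat{\tau}$ at which $\frac{d}{dt}\varphi(t',i)$ exists for every $i\in\mathcal{S}$ (this holds for a.e.\ $t'$, since $\mathcal{S}$ is finite and each $\varphi(\cdot,i)$ is absolutely continuous), and put $i\triangleq\omega(t')$. By the choice~\eqref{intro:u_suboptima} of $\hat{u}_\varphi$ as the minimizer in the definition~\eqref{intro:Hamiltonian} of $H_i(t',\varphi(t'))$, we have, for every $v\in V$ and in particular for $v=\beta(t',\omega)$,
\[
\sum_{j\in\mathcal{S}}Q_{i,j}\bigl(t',\hat{u}_\varphi(t',i),v\bigr)\varphi(t',j)+h\bigl(t',i,\hat{u}_\varphi(t',i),v\bigr)\le H_i(t',\varphi(t')).
\]
On the other hand, $t'<\hat{\tau}$ together with~\ref{ZE:upper_bound} forces $\varphi(t',i)<g(t',i)$, so~\ref{ZE:u_stability} applies and gives $\frac{\partial}{\partial t}\varphi(t',\omega(t'))=\frac{d}{dt}\varphi(t',i)\le-H_i(t',\varphi(t'))$. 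Adding the two inequalities shows that the integrand of $A$ is nonpositive at such $t'$; therefore $A(\cdot\wedge\hat{\tau})$ is non-increasing, $M$ is a supermartingale, and the proof is complete.

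The argument carries no genuinely new idea beyond this verification computation, so I expect the real effort to lie in the measure-theoretic bookkeeping: checking that $\hat{\tau}$ is a bona fide $\mathbb{F}_{t_0}$-stopping time whose defining minimum is attained (it is the hitting time of the closed set $\{(t,i):\varphi(t,i)=g(t,i)\}$ by the c\`adl\`ag path $t\mapsto(t,\omega(t))$); transferring the ``for each fixed state, a.e.\ in time'' form of~\ref{ZE:u_stability} into an ``a.e.\ in time along the random path'' statement (handled above via finiteness of $\mathcal{S}$ and~\ref{ZE:upper_bound}); and producing a measurable selector $\hat{u}_\varphi$ as in~\eqref{intro:u_suboptima}, which is available under~\ref{assumption:compact} and~\ref{assumption:continuous} by a measurable selection theorem.
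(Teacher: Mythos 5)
Your proof is correct and takes essentially the same route as the paper's: both rest on the martingale characterization of $\mathbb{P}^{\hat{u}_\varphi,\beta}_{t_0,i_0}$ from Definition~\ref{def:probability}, optional sampling/stopping at $\hat{\tau}_{\varphi,t_0}$, the inequality of~\ref{ZE:u_stability} on $[t_0,\hat{\tau}_{\varphi,t_0})$, and the minimizing property~\eqref{intro:u_suboptima} of $\hat{u}_\varphi$, so your supermartingale packaging is only a cosmetic variant of the paper's direct application of optional sampling. One minor remark: to conclude $\varphi(t',\omega(t'))<g(t',\omega(t'))$ (rather than merely $\neq$) for $t'<\hat{\tau}_{\varphi,t_0}$ you invoke~\ref{ZE:upper_bound}, which is not among the proposition's stated hypotheses --- the paper's proof silently relies on the same fact, so this is a point of care on your part rather than a flaw.
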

\begin{proof}
	First, due to assumption~\ref{ZE:boundary}, we have that the stopping time $\hat{\tau}_{\varphi,t_0}$ takes values in~$[t_0,T]$. 
	
	By the optional sampling theorem  \cite[Theorem 7.29]{Kallenberg} and the facts that process ~\eqref{proc:phi} with $\alpha=\alpha[\hat{u}_\varphi]$  is a $(\mathbb{P}^{\hat{u}_{\varphi},\beta}_{t_0,i_0},\mathbb{F}_{t_0})$-martingale, while $\mathbb{P}^{\hat{u}_{\varphi},\beta}_{t_0,i_0}(\{\omega:\omega(t_0)=i_0\})=1$, we have that 
	\[\begin{split}\varphi(t_0,i_0)=\mathbb{E}_{t_0,i_0}^{\hat{u}_\varphi,\beta}\bigg[ \phi&(\hat{\tau}_{\varphi,t_0},\omega(\hat{\tau}_{\varphi,t_0}))-\int_{t_0}^{\hat{\tau}_{\varphi,t_0}}\Bigl[\frac{\partial}{\partial t}\phi(t',\omega(t'))\\&+\sum_{j\in\mathcal{S}}Q_{\omega(t'),j}(t',\hat{u}_\varphi(t',\omega(t')),\beta(t',\omega))\phi(t',j)\Bigr]dt'\bigg].\end{split}\] The assumption that $\varphi$ satisfies condition~\ref{ZE:u_stability}  implies that 
	\[\frac{\partial}{\partial t}\varphi(t',\omega(t'))\leq -H(t',\varphi(t',\omega(t')))\] for a.e. $t'\in [t_0,\hat{\tau}_{\varphi,t_0}(\omega)]$. This and the definition of the stopping rule $\hat{\vartheta}_{\varphi}$ (that, in its own turn, determines the stopping time $\hat{\tau}_{\phi,t_0}$ see~\eqref{intro:tau_suboptimal}) give that
	\[\begin{split}\varphi(t_0,i_0)\geq \mathbb{E}_{t_0,i_0}^{\hat{u}_\varphi,\beta}g(\hat{\tau}_{\varphi,t_0},\omega(\hat{\tau}_{\varphi,t_0}&))\\+\mathbb{E}_{t_0,i_0}^{\hat{u}_\varphi,\beta}\int_{t_0}^{\hat{\tau}_{\varphi,t_0}}&{}\Bigl[H_i(t',\varphi(t'))\\&-\sum_{j\in\mathcal{S}}Q_{\omega(t'),j}(t',\hat{u}(t',\omega(t')),\beta(t',\omega))\phi(t',j)\Bigr]dt'.\end{split}\] Due to the definitions of $H_i$ (see~\eqref{intro:Hamiltonian}) and the strategy $\hat{u}_\varphi$ (see~\eqref{intro:u_suboptima}), we have that
	\[\begin{split}
	H_i(t',\varphi(t'))-\sum_{j\in\mathcal{S}}Q_{\omega(t'),j}(t',\hat{u}_\varphi(t',&\omega(t')),\beta(t',\omega))\phi(t',j)\\&\geq h(t',i,\hat{u}_\varphi(t',\omega(t')),\beta(t',\omega)).\end{split}\] Therefore, 
	\[\varphi(t_0,i_0)\geq \mathbb{E}_{t_0,i_0}^{\hat{u}_\varphi,\beta}\Bigl[g(\hat{\tau}_{\varphi,t_0},\omega(\hat{\tau}_{\varphi,t_0}))+\int_{t_0}^{\hat{\tau}_{\varphi,t_0}}h(t',i,\hat{u}_\varphi(t',\omega(t')),\beta(t',\omega))dt'\Bigr].\]  Using the definition of the process $G$, we obtain inequality~\eqref{prop_ustable:ineq:value}.
\end{proof}

Now we consider the second player's suboptimal strategy $\tilde{v}$ that is defined on the base of a function $\phi\in\mathcal{AC}$ as follows:
\begin{equation}\label{intro:v_suboptimal}
	\tilde{v}_\phi(t,i)\in \underset{v\in V}{\operatorname{Argmax}}\min_{u\in U}\Bigl[\sum_{j\in\mathcal{S}}Q_{i,j}(t,u,v)\varphi(t,j)+h(t,i,u,v)\Bigr].
\end{equation}

\begin{proposition}\label{prop:v_suboptimal}
	Let $\varphi\in\mathcal{AC}$ satisfy condition~\ref{ZE:upper_bound},~\ref{ZE:v_stability}. Then, for every $t_0\in [0,T]$, $i_0\in\mathcal{S}$, $\alpha\in \mathcal{U}_{t_0}$, $\tau\in\mathcal{T}_{t_0}$,
	\begin{equation}\label{proc:v_suboptimal:ineq:expect}
		\mathbb{E}_{t_0,i_0}^{\alpha,\tilde{v}_{\varphi}}G_{t_0}^{\alpha,\tilde{v}_\varphi}(\tau)\geq \varphi(t_0,i_0).
	\end{equation}
\end{proposition}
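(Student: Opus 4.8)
The plan is to mirror the argument of Proposition~\ref{prop:u_stable}, but now exploiting conditions~\ref{ZE:upper_bound} and~\ref{ZE:v_stability} instead of~\ref{ZE:boundary} and~\ref{ZE:u_stability}, and reversing all inequalities. First I would fix $t_0$, $i_0$, an arbitrary first player's strategy with memory $\alpha\in\mathcal{U}_{t_0}$ and an arbitrary stopping time $\tau\in\mathcal{T}_{t_0}$. Since $\tilde{v}_\varphi\in\mathcal{V}_{\operatorname{fb}}$ is feedback and $\alpha$ depends on a finite number of time instants is not assumed here, I would first note that $\mathbb{P}^{\alpha,\tilde v_\varphi}_{t_0,i_0}$ is well-defined whenever the pairing of a memory strategy with a feedback strategy is admissible (this is implicitly used already in the definition of $\operatorname{Val}^-$); so the relevant probability and expectation make sense. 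Then I apply Definition~\ref{def:probability} to $\phi=\varphi\in\mathcal{AC}$: the process~\eqref{proc:phi} with $\beta=\beta[\tilde v_\varphi]$ is a $(\mathbb{P}^{\alpha,\tilde v_\varphi}_{t_0,i_0},\mathbb{F}_{t_0})$-martingale, and by the optional sampling theorem \cite[Theorem 7.29]{Kallenberg} applied at the stopping time $\tau$, together with $\mathbb{P}^{\alpha,\tilde v_\varphi}_{t_0,i_0}(\{\omega:\omega(t_0)=i_0\})=1$, I get
\[\varphi(t_0,i_0)=\mathbb{E}_{t_0,i_0}^{\alpha,\tilde v_\varphi}\bigg[\varphi(\tau,\omega(\tau))-\int_{t_0}^{\tau}\Bigl[\tfrac{\partial}{\partial t}\varphi(t',\omega(t'))+\sum_{j\in\mathcal{S}}Q_{\omega(t'),j}(t',\alpha(t',\omega),\tilde v_\varphi(t',\omega(t')))\varphi(t',j)\Bigr]dt'\bigg].\]

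Next I use condition~\ref{ZE:v_stability}: for every $t'$ at which $\varphi(\cdot,\omega(t'))$ is differentiable — hence for a.e.\ $t'$, by absolute continuity — one has $\tfrac{\partial}{\partial t}\varphi(t',\omega(t'))\geq -H_{\omega(t')}(t',\varphi(t'))$. Substituting this lower bound into the integrand (note the sign: a larger $\tfrac{\partial}{\partial t}\varphi$ makes the bracketed term larger, hence the integral larger, hence $\varphi(t_0,i_0)$ no larger than the right side with $-H$ in place of the derivative) gives
\[\varphi(t_0,i_0)\leq \mathbb{E}_{t_0,i_0}^{\alpha,\tilde v_\varphi}\varphi(\tau,\omega(\tau))+\mathbb{E}_{t_0,i_0}^{\alpha,\tilde v_\varphi}\int_{t_0}^{\tau}\Bigl[H_{\omega(t')}(t',\varphi(t'))-\sum_{j\in\mathcal{S}}Q_{\omega(t'),j}(t',\alpha(t',\omega),\tilde v_\varphi(t',\omega(t')))\varphi(t',j)\Bigr]dt'.\]
Now I invoke the minimax representation of $H_i$ valid under~\ref{assumption:Isaacs} together with the definition~\eqref{intro:v_suboptimal} of $\tilde v_\varphi$: for $i=\omega(t')$,
\[H_i(t',\varphi(t'))=\min_{u\in U}\sum_{j\in\mathcal{S}}\Bigl[Q_{i,j}(t',u,\tilde v_\varphi(t',i))\varphi(t',j)+h(t',i,u,\tilde v_\varphi(t',i))\Bigr]\leq \sum_{j\in\mathcal{S}}Q_{i,j}(t',\alpha(t',\omega),\tilde v_\varphi(t',i))\varphi(t',j)+h(t',i,\alpha(t',\omega),\tilde v_\varphi(t',i)),\]
since $\alpha(t',\omega)\in U$ is an admissible choice in the minimum. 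Hence $H_{\omega(t')}(t',\varphi(t'))-\sum_{j}Q_{\omega(t'),j}(\cdots)\varphi(t',j)\leq h(t',\omega(t'),\alpha(t',\omega),\tilde v_\varphi(t',\omega(t')))$, which combined with the previous display yields
\[\varphi(t_0,i_0)\leq \mathbb{E}_{t_0,i_0}^{\alpha,\tilde v_\varphi}\Bigl[\varphi(\tau,\omega(\tau))+\int_{t_0}^{\tau}h(t',\omega(t'),\alpha(t',\omega),\tilde v_\varphi(t',\omega(t')))dt'\Bigr].\]

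Finally, condition~\ref{ZE:upper_bound} gives $\varphi(\tau,\omega(\tau))\leq g(\tau,\omega(\tau))$ pointwise, so replacing $\varphi(\tau,\omega(\tau))$ by $g(\tau,\omega(\tau))$ only increases the right-hand side; recognizing the resulting expression as $\mathbb{E}_{t_0,i_0}^{\alpha,\tilde v_\varphi}G_{t_0}^{\alpha,\tilde v_\varphi}(\tau)$ by the definition of $G_{t_0}^{\alpha,v}$ yields exactly~\eqref{proc:v_suboptimal:ineq:expect}. The main obstacle I anticipate is justifying the applicability of the optional sampling theorem, i.e.\ that $\tau$ is a bounded stopping time (here automatic since $\tau\in\mathcal{T}_{t_0}$ takes values in $[t_0,T]$) and that the martingale~\eqref{proc:phi} has the integrability needed — this follows from compactness of $U,V$ (\ref{assumption:compact}), continuity and hence boundedness of $Q_{i,j}$ and $h$ (\ref{assumption:continuous}) on $[0,T]$, and the finiteness of $\mathcal{S}$, so the process and its jumps are uniformly bounded; the only genuinely delicate point is the measurability of $t\mapsto\tilde v_\varphi(t,i)$ required for $\tilde v_\varphi\in\mathcal{V}_{\operatorname{fb}}$, which one secures by a standard measurable selection argument applied to the Argmax in~\eqref{intro:v_suboptimal} using~\ref{assumption:compact} and~\ref{assumption:continuous}.
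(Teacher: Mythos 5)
Your proposal is correct and follows essentially the same route as the paper's proof: apply optional sampling to the martingale~\eqref{proc:phi} with $\phi=\varphi$ at the stopping time $\tau$, use condition~\ref{ZE:v_stability} to bound the time derivative from below by $-H$, use the maximin form of the Hamiltonian (via~\ref{assumption:Isaacs}) together with the Argmax definition of $\tilde v_\varphi$ to dominate the integrand by the running cost $h$, and finish with condition~\ref{ZE:upper_bound} to pass from $\varphi(\tau,\omega(\tau))$ to $g(\tau,\omega(\tau))$. Your version merely spells out the sign bookkeeping and the admissibility/measurability caveats that the paper leaves implicit (and you even avoid a sign typo present in the paper's final display).
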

\begin{proof}
	The proof is also by the verification arguments. Using the fact that  process ~\eqref{proc:phi} with $\phi=\varphi$ and $\beta=\beta[\tilde{v}_{\varphi}]$ is a $(\mathbb{P}_{t_0,i_0}^{\alpha,\tilde{v}_{\varphi}},\mathbb{F}_{t_0})$-martingale, the equality $\mathbb{P}_{t_0,i_0}^{\alpha,\tilde{v}_{\varphi}}(\{\omega:\omega(t_0)=i_0\})=1$ and optional sampling theorem \cite[Theorem 7.29]{Kallenberg}, we conclude that
	\[\begin{split}\varphi(t_0,i_0)=\mathbb{E}_{t_0,i_0}^{\alpha,\tilde{v}_{\varphi}}\bigg[\varphi(&\tau,\omega(\tau)) \\&-\int_{t_0}^\tau\Bigl[\frac{\partial}{\partial t}\varphi(t',\omega(t'))+\sum_{j\in\mathcal{S}}Q_{\omega(t'),j}(t,\alpha(t',\omega),\tilde{v}(t',\omega(t')))dt' \Bigr]\bigg].
	\end{split}\] Applying conditions~\ref{ZE:upper_bound},~\ref{ZE:v_stability}, the definitions of the Hamiltonian (see~\eqref{intro:Hamiltonian}) and the strategy $\tilde{v}_\varphi$ (see~\eqref{intro:v_suboptimal}), we obtain the following estimate:
	\[\varphi(t_0,i_0)\leq\mathbb{E}_{t_0,i_0}^{\alpha,\tilde{v}_{\varphi}}\bigg[g(\tau,\omega(\tau)) -\int_{t_0}^\tau h(t',\omega(t'),\alpha(t',\omega),\tilde{v}_\varphi(t',\omega(t')))\bigg].\]
	This gives the statement of the proposition.
\end{proof}

\begin{proof}[Proof of Theorem~\ref{th:verification}]
This theorem directly follows from Propositions~\ref{prop:u_stable},~\ref{prop:v_suboptimal} and inequality~\eqref{ineq:Values_upper_lower_st}.
\end{proof}

\section{Existence of solution for the Zachrisson equation with unilateral constraints}\label{sect:existence}
To prove the existence of the solution of the Zachrisson equation with unilateral constraints, we consider the sequence of functions $\varphi_N:[0,T]\times\mathcal{S}\rightarrow \mathbb{R}$ defined as follows.

First, we set $t^k_N\triangleq Tk/N$. Then, let 
\begin{equation}\label{exist:itro:phi_N_T}
	\varphi_N(T,i)\triangleq g(T,i).
\end{equation} Now, assume that the function $\varphi_N$ is already defined for all $t\in [t^{k+1}_N,T]$ and every $i\in \mathcal{S}$. Let $\psi_N^k:[t^k_N,t^{k+1}_N]\times \mathcal{S}\rightarrow \mathbb{R}$ solve the boundary value problem on $[t^k_N,t^{k+1}_N]$:
\begin{equation}\label{exist:eq:psi_N}
	\frac{d}{dt}\psi_N^k(t,i)=-H_i(t,\psi_N^k(t)),\ \ \psi_N^k(t^{k+1}_N,i)=\varphi_N(t^{k+1}_N,i), \ \ i\in\mathcal{S}.
\end{equation}
Finally, set, for $t\in [t^k_N,t^{k+1}_N]$,
\begin{equation}\label{exist:intro:phi_N_k}
	\varphi_N(t,i)\triangleq \psi_N^k(t,i)\wedge g(t,i).
\end{equation}

Now examine the properties of the sequence $\{\varphi_N\}_{N=1}^\infty$.

\begin{lemma}\label{lm:phi_N_boundness} There exists a constant $C_1$ such that, for every natural $N$, $t\in [0,T]$, $i\in\mathcal{S}$,
	\[|\varphi_N(t,i)|\leq C_1.\]
\end{lemma}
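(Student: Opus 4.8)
The plan is to establish a uniform bound on $\varphi_N$ by tracking how far the "free" ODE solutions $\psi_N^k$ can move in a single step and then summing across the $N$ steps. First I would observe that, by assumption~\ref{assumption:compact} on compactness of $U$, $V$, assumption~\ref{assumption:continuous} on continuity of the data, and the explicit formula~\eqref{intro:Hamiltonian} for $H_i$, the Hamiltonian $H_i(t,w)$ is continuous and moreover enjoys a global Lipschitz bound in $w$: there is a constant $L$ with $|H_i(t,w)-H_i(t,w')|\le L\|w-w'\|$ for all $t,i$, coming from the fact that each $Q_{i,j}(t,u,v)$ is bounded on the compact parameter set, so $H_i$ is a min-max of affine functions of $w$ with uniformly bounded slopes. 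In particular $|H_i(t,w)|\le L\|w\| + M_0$ where $M_0\triangleq\sup_{t,i,u,v}|h(t,i,u,v)| + \text{(lower order)}$ is finite.

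Next I would bound $\|\varphi_N(t)\|$, the (say) $\ell^\infty$-norm of the vector $\varphi_N(t)=(\varphi_N(t,1),\dots,\varphi_N(t,d))$, by a backward Gr\"onwall argument. On each interval $[t_N^k,t_N^{k+1}]$ the function $\psi_N^k$ solves $\frac{d}{dt}\psi_N^k(t,i)=-H_i(t,\psi_N^k(t))$ with terminal data $\varphi_N(t_N^{k+1})$; since $\|\frac{d}{dt}\psi_N^k(t)\|\le L\|\psi_N^k(t)\|+M_0$, integrating backward from $t_N^{k+1}$ and applying Gr\"onwall yields $\sup_{t\in[t_N^k,t_N^{k+1}]}\|\psi_N^k(t)\|\le e^{L T/N}\bigl(\|\varphi_N(t_N^{k+1})\| + (M_0 T/N)\bigr)$, a bound that does not see the internal structure of the step. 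Then~\eqref{exist:intro:phi_N_k} gives $\varphi_N(t,i)=\psi_N^k(t,i)\wedge g(t,i)$, and since $g$ is continuous on the compact $[0,T]\times\mathcal S$ hence bounded by some $M_g$, taking a minimum with $g$ cannot increase the $\ell^\infty$-norm beyond $\max(M_g,\|\psi_N^k(t)\|)$; in particular $\|\varphi_N(t_N^k)\|\le \max\bigl(M_g,\ e^{LT/N}(\|\varphi_N(t_N^{k+1})\|+M_0T/N)\bigr)$.

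Iterating this recursion over $k=N-1,N-2,\dots,0$ and using $\|\varphi_N(T)\|=\|g(T)\|\le M_g$, I get $\|\varphi_N(t_N^k)\|\le e^{L T}\bigl(M_g + M_0 T\bigr) + M_g$ uniformly in $N$ and $k$ — the key point being that $\bigl(e^{LT/N}\bigr)^{N}=e^{LT}$ so the compounding over $N$ steps stays controlled — and one more application of the single-step Gr\"onwall estimate extends the bound from the grid points $t_N^k$ to all $t\in[0,T]$. Setting $C_1\triangleq e^{LT}(M_g+M_0T)+M_g+1$ (say) finishes the proof. The only genuinely delicate point is making the constants in the Lipschitz/linear-growth bound for $H_i$ explicit and $N$-independent — everything there is standard once compactness and continuity of $Q$ and $h$ are invoked — after which the Gr\"onwall bookkeeping is routine; I would keep it brief.
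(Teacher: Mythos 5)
Your argument is correct and is essentially the paper's proof: both establish the $N$-independent linear-growth bound $|H_i(t,w)|\le dM_Q\|w\|_\infty+M_h$ from compactness and continuity, run a backward Gr\"onwall estimate on each subinterval $[t_N^k,t_N^{k+1}]$, observe that $|\psi_N^k\wedge g|\le\max(M_g,|\psi_N^k|)$ so the truncation never worsens the bound, and close a backward induction over the grid. The only cosmetic difference is that the paper packages the compounded per-step estimate into the single explicit majorant $\mathcal{M}(t)=(M_g+M_h(T-t))e^{dM_Q(T-t)}$ rather than iterating the discrete recursion, which amounts to the same $e^{LT}(M_g+M_hT)$-type constant you obtain.
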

\begin{proof}
	First, denote
	\begin{equation}\label{exist:intro:M_Q}
		M_Q\triangleq \max\{|Q_{i,j}(t,u,v)|:i,j\in\mathcal{S},\, t\in [0,T],\, u\in U,\, v\in V\},
	\end{equation}
	\begin{equation}\label{exist:intro:M_h}
	M_h\triangleq \max\{|h(t,i,u,v)|: t\in [0,T],\, i\in\mathcal{S},\, u\in U,\, v\in V\},
\end{equation}
	\begin{equation}\label{exist:intro:M_g}
	M_g\triangleq \max\{|g(t,i)|:t\in [0,T],\, i\in\mathcal{S}\}.
\end{equation} Due to  assumptions~\ref{assumption:compact},~\ref{assumption:continuous}, the quantities $M_Q$, $M_h$ and $M_g$ are finite.

Put
\begin{equation}\label{exist:intro:M_function}
	\mathcal{M}(t)\triangleq (M_g+M_h(T-t))e^{dM_Q(T-t)}.
\end{equation}

We will prove the estimate
\begin{equation}\label{exist:ineq:phi_N_M}
	|\varphi_N(t,i)|\leq \mathcal{M}(t)
\end{equation} by the backward induction. 

By the definition of the function $\mathcal{M}$,~\eqref{exist:ineq:phi_N_M} holds true at $t=T$. 

Further, assume that~\eqref{exist:ineq:phi_N_M} is fulfilled on $[t^{k+1}_N,T]$. Let us show that the functions 
\begin{equation}\label{exist:intro:xi_N_k}\xi_N^k(t)\triangleq \max_{i\in\mathcal{S}}|\psi^k_N(t,i)|\end{equation} satisfies on $[t^k_N,t^{k+1}_N]$ the following inequality:
\begin{equation}\label{exist:ineq:psi_N_absolute}
	\xi_N^k(t)\leq \mathcal{M}(t).
\end{equation} Indeed, by~\eqref{exist:eq:psi_N} and~\eqref{intro:Hamiltonian} we have that 
\[\xi^k_N(t)\leq \mathcal{M}(t^{k+1}_N)+\int_{[t,t^{k+1}_N]}dM_Q\xi^k_N(t')dt'+M_h(t^{k+1}_N-t).\] Applying the Gronwall's inequality and definition of the function $\mathcal{M}$ (see~\eqref{exist:intro:M_function}), we conclude that~\eqref{exist:ineq:psi_N_absolute} holds true on $[t^k_N,t^{k+1}_N]$. The definition of $\varphi_N$ on $[t^k_N,t^{k+1}_N]$ (see~\eqref{exist:intro:phi_N_k}) and the fact that $g$ is bounded by $M_g$, while $\mathcal{M}(t)\geq M_g$ yield estimate~\eqref{exist:ineq:phi_N_M} on $[t^k_N,t^{k+1}_N]$.

To complete the proof, let
\[C_1\triangleq \mathcal{M}(0).\]

\end{proof}

\begin{lemma}\label{lm:continuous_psi} If $k\in \{0,\ldots,N-1\}$, $s,r\in [t^k_N,t^{k+1}_N]$, then
		\begin{equation*}|\psi^k_N(s,i)-\psi^k_N(r,i)|\leq C_2|s-r|,\end{equation*} where $C_2$ is a constant depending only on $g$, $h$ and $Q$,
\end{lemma}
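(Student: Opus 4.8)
The plan is to bound the increments of $\psi^k_N$ directly from the ODE \eqref{exist:eq:psi_N}. Since $\psi^k_N(\cdot,i)$ is $C^1$ on $[t^k_N,t^{k+1}_N]$ with $\frac{d}{dt}\psi^k_N(t,i)=-H_i(t,\psi^k_N(t))$, it suffices to obtain a uniform bound $|H_i(t,\psi^k_N(t))|\leq C_2$ that does not depend on $N$ or $k$, and then integrate: for $s,r\in[t^k_N,t^{k+1}_N]$,
\[
|\psi^k_N(s,i)-\psi^k_N(r,i)|=\Bigl|\int_r^s H_i(t',\psi^k_N(t'))\,dt'\Bigr|\le C_2|s-r|.
\]

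The key step is the uniform bound on the Hamiltonian along the trajectory. From the defining formula \eqref{intro:Hamiltonian},
\[
|H_i(t,w)|\le \max_{u\in U}\max_{v\in V}\Bigl|\sum_{j\in\mathcal{S}}Q_{i,j}(t,u,v)w_j+h(t,i,u,v)\Bigr|\le dM_Q\max_{j\in\mathcal{S}}|w_j|+M_h,
\]
with $M_Q$, $M_h$ as in \eqref{exist:intro:M_Q}, \eqref{exist:intro:M_h}. I would then invoke Lemma~\ref{lm:phi_N_boundness}: although that lemma is stated for $\varphi_N$, its proof actually establishes $\xi^k_N(t)=\max_{i}|\psi^k_N(t,i)|\le \mathcal{M}(t)\le \mathcal{M}(0)=C_1$ on every subinterval, so $\max_j|\psi^k_N(t',j)|\le C_1$ throughout. (If one prefers to rely only on the \emph{statement} of Lemma~\ref{lm:phi_N_boundness}, note $\varphi_N(t,i)=\psi^k_N(t,i)\wedge g(t,i)$, so on the set where the minimum is attained by $\psi^k_N$ one has $|\psi^k_N(t,i)|=|\varphi_N(t,i)|\le C_1$, and where it is attained by $g$ one has $\psi^k_N(t,i)\ge g(t,i)\ge -M_g$, while an upper bound on $\psi^k_N$ still follows from the Gronwall estimate of that proof; in any case a uniform bound $\max_j|\psi^k_N(t',j)|\le C_1'$ is available.) Hence $|H_i(t',\psi^k_N(t'))|\le dM_Q C_1+M_h=:C_2$, a constant depending only on $g$, $h$, $Q$ (through $M_Q$, $M_h$, $M_g$, $T$).

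The main obstacle, such as it is, is the bookkeeping point just flagged: Lemma~\ref{lm:phi_N_boundness} is phrased for $\varphi_N$ rather than for $\psi^k_N$, so I must either cite the internal estimate \eqref{exist:ineq:psi_N_absolute} from its proof or re-derive the uniform bound on $\psi^k_N$ from the same Gronwall argument. Once a bound $\max_j|\psi^k_N(t',j)|\le C_1$ uniform in $N$, $k$, $t'$ is in hand, the rest is the one-line integration above, and since $C_2$ is independent of $N$ and $k$ the Lipschitz estimate is exactly the claimed form.
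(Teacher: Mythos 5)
Your proposal is correct and follows essentially the same route as the paper: bound $|H_i(t,\psi^k_N(t))|\leq dM_Q\max_j|\psi^k_N(t,j)|+M_h$, invoke the internal estimate $\xi^k_N(t)\leq\mathcal{M}(t)\leq C_1$ from the proof of Lemma~\ref{lm:phi_N_boundness} (inequality~\eqref{exist:ineq:psi_N_absolute}), and integrate the ODE to get $C_2=dM_QC_1+M_h$. The bookkeeping point you flag is real but you resolve it exactly as the paper does, by citing the bound on $\psi^k_N$ itself rather than on $\varphi_N$.
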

\begin{proof}
	Since $\psi_N^k$ satisfies ODE \eqref{exist:eq:psi_N}, it suffices to estimate the right-hand side of this equation. First, due to the definition of the Hamiltonian and the constants $M_Q$, $M_h$ (see~\eqref{intro:Hamiltonian},~\eqref{exist:intro:M_Q},~\eqref{exist:intro:M_h}), we have that 
	\[|H_i(t,\psi_N^k)|\leq dM_Q\max_{i\in\mathcal{S}}|\psi^k_N(t,i)|+M_h.\] Using the estimate of $\max_{i\in\mathcal{S}}|\psi^k_N(t,i)|$  by the function $\mathcal{M}$ (see \eqref{exist:intro:xi_N_k}, \eqref{exist:ineq:psi_N_absolute}) and the fact that $\mathcal{M}(t)\leq C_1$, we obtain the statement of the lemma with  $C_2\triangleq dM_QC_1+M_h$.
\end{proof}

\begin{lemma}\label{lm:equicontinuous} There exists a constant $C_3$ such that, for every natural $N$, $s,r\in [0,T]$, $i\in\mathcal{S}$,
	\begin{equation}\label{lm_equicont:ineq:phi}|\varphi_N(s,i)-\varphi_N(r,i)|\leq C_3|s-r|.\end{equation}
\end{lemma}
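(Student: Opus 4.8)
The plan is to establish the uniform Lipschitz estimate \eqref{lm_equicont:ineq:phi} by combining the local (within-interval) Lipschitz bound from Lemma~\ref{lm:continuous_psi} with a telescoping argument over the grid points $t^0_N,\dots,t^N_N$. The key observation is that the truncation operation $\psi\mapsto\psi\wedge g$ in \eqref{exist:intro:phi_N_k} does not spoil Lipschitz continuity once we know that $g$ itself is Lipschitz in time. By assumption~\ref{assumption:g_Lipschit}, there is a constant $L_g$ with $|g(s,i)-g(r,i)|\leq L_g|s-r|$ for all $i$. The elementary inequality $|(a\wedge c)-(b\wedge e)|\leq\max\{|a-b|,|c-e|\}$ then shows that on a single interval $[t^k_N,t^{k+1}_N]$ the function $\varphi_N(\cdot,i)=\psi^k_N(\cdot,i)\wedge g(\cdot,i)$ is Lipschitz with constant $\max\{C_2,L_g\}$.

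First I would fix $s<r$ in $[0,T]$ and locate the grid indices: let $k$ be such that $s\in[t^k_N,t^{k+1}_N]$ and $\ell$ such that $r\in[t^\ell_N,t^{\ell+1}_N]$, with $k\leq\ell$. If $k=\ell$, the single-interval bound just described gives \eqref{lm_equicont:ineq:phi} immediately with $C_3=\max\{C_2,L_g\}$. If $k<\ell$, I would split the increment as
\[|\varphi_N(s,i)-\varphi_N(r,i)|\leq |\varphi_N(s,i)-\varphi_N(t^{k+1}_N,i)|+\sum_{m=k+1}^{\ell-1}|\varphi_N(t^m_N,i)-\varphi_N(t^{m+1}_N,i)|+|\varphi_N(t^\ell_N,i)-\varphi_N(r,i)|.\]
Here a small subtlety must be addressed: the value $\varphi_N(t^{k+1}_N,i)$ is the common endpoint shared by the two adjacent pieces, and I must check that the construction \eqref{exist:eq:psi_N}--\eqref{exist:intro:phi_N_k} is consistent there, i.e.\ that $\psi^{k+1}_N(t^{k+1}_N,i)=\varphi_N(t^{k+1}_N,i)$ by definition, so $\varphi_N(\cdot,i)$ is unambiguously defined and each summand above is an increment within a single grid interval (or a sub-interval of one). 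Applying the single-interval Lipschitz bound to each of the $\ell-k+1$ terms and noting that the lengths of the sub-intervals involved add up exactly to $r-s$, the sum collapses to $\max\{C_2,L_g\}\,|s-r|$.

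Hence \eqref{lm_equicont:ineq:phi} holds with $C_3\triangleq\max\{C_2,L_g\}$, which depends only on $g$, $h$, $Q$ (through $C_2$, which by Lemma~\ref{lm:continuous_psi} and Lemma~\ref{lm:phi_N_boundness} depends only on these data) and the Lipschitz constant of $g$. The main point to be careful about is not any single estimate but the bookkeeping of the telescoping sum: one must verify that the truncation is applied piecewise with matching boundary values so that the grid points are genuine points of the graph of $\varphi_N(\cdot,i)$, and that every increment in the decomposition stays inside one interval of smoothness of $\psi^m_N$ where Lemma~\ref{lm:continuous_psi} applies. Once that is in place, the argument is a routine chaining of the local bound, and the uniformity in $N$ is automatic because $C_2$ and $L_g$ do not depend on $N$.
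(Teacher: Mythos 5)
Your proof is correct and follows essentially the same route as the paper: a single-interval Lipschitz bound for $\varphi_N=\psi^k_N\wedge g$ with constant $C_2\vee L_g$, followed by chaining over the grid intervals. The only cosmetic difference is that you package the paper's two-case analysis (whether the minimum is attained by $\psi^k_N$ or by $g$) into the elementary inequality for minima, which is the same argument in disguise.
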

\begin{proof}

	Recall (see assumption~\ref{assumption:g_Lipschit}) the  function $g$ is Lipschitz continuous w.r.t. the time variable. Let $L_g$ denote this Lipschitz constant. Put
	\begin{equation}\label{exist:intro:C_3}
		C_3\triangleq L_g\vee C_2.
	\end{equation} 

Further, let us consider  time instants $t',t''$ such that they lies in the same time interval $[t^k_N,t^{k+1}_N]$ for some $k$. Without loss of generality, one can assume that $\varphi_N(t',i)>\varphi_N(t'',i)$. To estimate $|\varphi_N(t',i)-\varphi_N(t'',i)|=\varphi_N(t',i)-\varphi_N(t'',i)$, we consider two cases.
\begin{enumerate}
	\item $\varphi_N(t'',i)< g(t'',i)$. Therefore, $\varphi_N(t'',i)=\psi_N^k(t'',i)$, while $\varphi_N(t',i)\leq \psi_N^k(t',i)$. The latter inequality directly follows from the definition of the function $\varphi_N$ (see~\eqref{exist:intro:phi_N_k}). By Lemma~\ref{lm:continuous_psi}, we have that
	\[\varphi_N(t',i)-\varphi_N(t'',i)\leq \psi^k_N(t',i)-\psi_N(t'',i)\leq C_2|t'-t''|\leq C_3|t'-t''|.\]
	\item $\varphi_N(t'',i)= g(t'',i)$. As above, by definition of the function $\varphi_N$ (see \eqref{exist:intro:phi_N_k}), $\varphi_N(t',i)\leq g(t',i)$. Hence,
	\[\varphi_N(t',i)-\varphi_N(t'',i)\leq g(t',i)-g(t'',i)\leq L_g|t'-t''|\leq C_3|t'-t''|.\]
	
\end{enumerate} Therefore, we derive~\eqref{lm_equicont:ineq:phi} for $s,r$ lying in some interval $[t^k_N,t^{k+1}_N]$. To complete the proof it suffices to sum up inequalities~\eqref{lm_equicont:ineq:phi} on small time intervals.
\end{proof}

\begin{lemma}\label{lm:u_stability_N} Let $t\in [0,T]$ and $i\in\mathcal{S}$ be such that $\phi_N(t,i)<g(t,i)$ and $\varphi_N$ is differentiable at $t$. Then 
	\begin{equation}\label{lm_u_stab:ineq:derivative}\frac{d}{dt}\phi_N(t,i)\leq -H_i(t,\varphi_N(t)).\end{equation}
\end{lemma}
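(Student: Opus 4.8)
The plan is to exploit the local structure of $\varphi_N$ on each mesh subinterval $[t^k_N, t^{k+1}_N]$. Fix $t$ and $i$ with $\varphi_N(t,i) < g(t,i)$ and with $\varphi_N(\cdot,i)$ differentiable at $t$. Let $k$ be such that $t \in [t^k_N, t^{k+1}_N]$ (if $t$ is an interior mesh point, treat it as belonging to, say, the left interval; the case $t = T$ is excluded since $\varphi_N(T,i) = g(T,i)$ there). By the definition~\eqref{exist:intro:phi_N_k} we have $\varphi_N(t,i) = \psi_N^k(t,i) \wedge g(t,i)$, and since $\varphi_N(t,i) < g(t,i)$, it follows that $\psi_N^k(t,i) \le \varphi_N(t,i) < g(t,i)$, hence in fact $\varphi_N(t,i) = \psi_N^k(t,i)$.

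The key step is to upgrade this pointwise equality to an equality of the functions in a neighbourhood of $t$, which then transfers the derivative. Because $g(\cdot,i)$ is continuous and $\psi_N^k(\cdot,i)$ is continuous (Lemma~\ref{lm:continuous_psi}), the strict inequality $\psi_N^k(t,i) < g(t,i)$ persists on some relatively open neighbourhood $(t-\delta, t+\delta) \cap [t^k_N, t^{k+1}_N]$; on that neighbourhood $\varphi_N(\cdot,i) = \psi_N^k(\cdot,i)$ identically. Consequently $\psi_N^k(\cdot,i)$ is also differentiable at $t$ (with the one-sided caveat if $t$ is an endpoint of the subinterval — but then one simply uses the one-sided derivative, which suffices since the ODE~\eqref{exist:eq:psi_N} holds up to the boundary) and
\[
\frac{d}{dt}\varphi_N(t,i) = \frac{d}{dt}\psi_N^k(t,i) = -H_i(t, \psi_N^k(t)).
\]
It then remains to replace $\psi_N^k(t)$ by $\varphi_N(t)$ inside the Hamiltonian. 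Here I would argue that $\varphi_N(t,j) \le \psi_N^k(t,j)$ for every $j \in \mathcal{S}$ (again immediate from~\eqref{exist:intro:phi_N_k}), and combine this with the $i$-th coordinate being equal, $\varphi_N(t,i) = \psi_N^k(t,i)$, together with monotonicity properties of $H_i$: since $Q_{i,j}(t,u,v) \ge 0$ for $j \ne i$, the map $w \mapsto H_i(t,w)$ is nondecreasing in each coordinate $w_j$ with $j \ne i$. Lowering those coordinates can only decrease $H_i$, so $H_i(t,\varphi_N(t)) \le H_i(t,\psi_N^k(t))$, whence $-H_i(t,\varphi_N(t)) \ge -H_i(t,\psi_N^k(t)) = \frac{d}{dt}\varphi_N(t,i)$, which is exactly~\eqref{lm_u_stab:ineq:derivative}.

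The main obstacle I anticipate is the careful bookkeeping of the monotonicity argument for $H_i$ in the presence of the $j = i$ coordinate, for which the coefficient $Q_{i,i} \le 0$ and monotonicity fails — but this is harmless precisely because the $i$-th coordinates of $\varphi_N(t)$ and $\psi_N^k(t)$ coincide, so only the off-diagonal coordinates actually change. A minor secondary point is the treatment of a differentiability point $t$ that coincides with a mesh node $t^k_N$: there the two adjacent branches must be reconciled, but since $\varphi_N$ is globally Lipschitz (Lemma~\ref{lm:equicontinuous}) and continuous, and the strict inequality $\varphi_N(t,i) < g(t,i)$ guarantees $\varphi_N$ equals $\psi_N^{k-1}$ on a left-neighbourhood and $\psi_N^k$ on a right-neighbourhood with matching values at $t$, the (two-sided) derivative, where it exists, agrees with the relevant one-sided ODE right-hand sides, and $H_i$ is continuous in $t$, so the bound goes through.
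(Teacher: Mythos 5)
Your proof is correct and follows essentially the same route as the paper's: identify $\varphi_N(\cdot,i)$ with $\psi_N^k(\cdot,i)$ on a neighbourhood where the strict inequality $\varphi_N<g$ persists, read off the derivative from the ODE~\eqref{exist:eq:psi_N}, and then use $\varphi_N(t,j)\le\psi_N^k(t,j)$ together with $Q_{i,j}\ge 0$ for $j\ne i$ and the equality of the $i$-th coordinates to get $H_i(t,\varphi_N(t))\le H_i(t,\psi_N^k(t))$. Your handling of the mesh nodes (matching the left and right branches) is, if anything, slightly more careful than the paper's, which only invokes the left-hand derivative there.
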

\begin{proof}  First, let us consider the case where $t\in (t^k_N,t^{k+1}_N)$ for some $k$. Due to the assumption that $\phi_N(t,i)<g(t,i)$, we have that \[\phi_N(t,i)=\psi_N^k(t,i).\] Moreover, the inequality $\phi_N(t',i)<g(t',i)$ holds in some neighborhood of $t$. Thus, 
	\[\frac{d}{dt}\phi_N(t,i)=\frac{d}{dt}\psi_N^k(t).\] By construction of the function $\psi_N^k$, we have that
	\begin{equation*}\label{exist:equality:derivative_inner}\frac{d}{dt}\varphi_N(t,i)=\frac{d}{dt}\psi_N^k(t)=-H_i(t,\psi_N^k(t)).\end{equation*}
	
	Further, notice that $\varphi_N(t,j)\leq \psi_N^k(t,j)$, $Q_{i,j}(t,u,v)\geq 0$  for $i\neq j$. This and the equality $\phi_N(t,i)=\psi_N^k(t,i)$ imply that, for every $u\in U$ and $v\in V$,
	\begin{equation}\label{exist:ineq:Ham_phi_psi}\sum_{j\in\mathcal{S}}Q_{i,j}(t,u,v)\varphi_N(t,j)+h_i(t,i,u,v)\leq \sum_{j\in\mathcal{S}}Q_{i,j}(t,u,v)\psi_N^k(t,j)+h_i(t,i,u,v).\end{equation} Therefore, using the definition of the Hamiltonian (see~\eqref{intro:Hamiltonian}), we obtain the inequality
	\[H_i(t,\varphi_N(t))\leq H_i(t,\psi_N^k(t)).\] Combining this with~\eqref{exist:equality:derivative_inner}, we conclude that~\eqref{lm_u_stab:ineq:derivative} holds true for $t\neq t^k_N$,

	Now let us consider the case when $t=t^{k+1}_N$ for some $k$. By construction of the function $\varphi_N$, we have that $\varphi_N(t_N^{k+1})=\psi_N^{k}(t^{k+1}_N)$. Additionally, due to the assumption $\varphi_N(t_N^{k+1},i)<g(t_N^{k+1},i)$ and the continuity of these functions, we conclude that 
	 $\varphi_N(t',i)<g(t',i)$ when $t'\in (t_N^{k+1}-\delta,t_N^{k+1}]$. Hence, the construction of the function $\varphi_N$ gives that $\varphi_N(t',i)=\psi_N^{k}(t',i)$ on $(t_N^{k+1}-\delta,t_N^{k+1}]$. Therefore,
	 \[\frac{d}{dt}\varphi_N(t^{k+1}_N,i)=\frac{d^-}{dt^-}\psi_N^{k}(t_N^{k+1},i)=-H_i(t_N^{k+1},\psi_N^{k-1}(t_N^{k+1})).\] Here $\frac{d^-}{dt-}$ stands for the left-hand derivative. This and~\eqref{exist:ineq:Ham_phi_psi} yield that \eqref{lm_u_stab:ineq:derivative} holds true in  the case when $t\in \{t^k_N\}_{k=1}^{N-1}$.
\end{proof}

\begin{lemma}\label{lm:v_stability_N} There exists a constant $C_4$ such that, for every $s,r\in [0,T]$, $s<r$, and $i\in \mathcal{S}$, the following inequality holds true:
	\[\varphi_N(r,i)-\varphi_N(s,i)\geq \int_s^r H_i(t,\varphi_N(t))dt-\frac{C_4}{N}\]
\end{lemma}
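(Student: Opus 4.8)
The plan is to show that on each subinterval $[t^k_N,t^{k+1}_N]$ the function $\varphi_N$ "almost" satisfies the integrated form of the differential inequality $\frac{d}{dt}\varphi_N(t,i)\ge -H_i(t,\varphi_N(t))$, with an error that is $O(1/N^2)$ per interval, hence $O(1/N)$ after summing over the $N$ intervals. The key point is to control the discrepancy between $\varphi_N$ and the auxiliary solution $\psi^k_N$ on $[t^k_N,t^{k+1}_N]$, exploiting that $\varphi_N=\psi^k_N\wedge g$ and that $\psi^k_N$ solves the exact Zachrisson ODE on that interval.

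First I would fix $k$ and work on $I_k:=[t^k_N,t^{k+1}_N]$. On $I_k$ we have $\varphi_N(t,i)=\psi^k_N(t,i)\wedge g(t,i)\le\psi^k_N(t,i)$, so by Lemma~\ref{lm:continuous_psi} and the Lipschitz property of $g$ (constant $L_g$) together with Lemma~\ref{lm:equicontinuous} (constant $C_3$), the difference $0\le\psi^k_N(t,i)-\varphi_N(t,i)$ is bounded by $(C_2+C_3)(t-t^k_N)\le (C_2+C_3)/N$ on $I_k$ — indeed at $t=t^{k+1}_N$ it vanishes, and both functions are Lipschitz, so actually $0\le\psi^k_N(t,i)-\varphi_N(t,i)\le (C_2+C_3)(t^{k+1}_N-t)$. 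Next, since $\psi^k_N$ solves \eqref{exist:eq:psi_N} exactly, $\psi^k_N(r,i)-\psi^k_N(s,i)=-\int_s^r H_i(t,\psi^k_N(t))\,dt$ for $s,r\in I_k$. Using the Lipschitz dependence of $H_i(t,\cdot)$ on its vector argument — which follows from $|H_i(t,w)-H_i(t,w')|\le dM_Q\|w-w'\|$ by the standard argument that a min-max of functions $dM_Q$-Lipschitz in $w$ is itself $dM_Q$-Lipschitz — we get
\[
\Bigl|\int_s^r H_i(t,\psi^k_N(t))\,dt-\int_s^r H_i(t,\varphi_N(t))\,dt\Bigr|\le dM_Q\int_s^r\max_j|\psi^k_N(t,j)-\varphi_N(t,j)|\,dt\le \frac{dM_Q(C_2+C_3)}{N}\,(r-s).
\]

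Now combine: for $s,r\in I_k$ with $s<r$, write $\varphi_N(r,i)-\varphi_N(s,i)=[\psi^k_N(r,i)-\psi^k_N(s,i)]-[\psi^k_N(r,i)-\varphi_N(r,i)]+[\psi^k_N(s,i)-\varphi_N(s,i)]$. The middle bracket is $\ge 0$ so dropping it only decreases the right side... more carefully, $-[\psi^k_N(r,i)-\varphi_N(r,i)]\ge -(C_2+C_3)(t^{k+1}_N-r)\ge -(C_2+C_3)/N$ and $[\psi^k_N(s,i)-\varphi_N(s,i)]\ge 0$, so
\[
\varphi_N(r,i)-\varphi_N(s,i)\ge -\int_s^r H_i(t,\psi^k_N(t))\,dt-\frac{C_2+C_3}{N}\ge \int_s^r H_i(t,\varphi_N(t))\,dt-\frac{dM_Q(C_2+C_3)(r-s)}{N}-\frac{C_2+C_3}{N}.
\]
Since $r-s\le T/N$, the per-interval error is bounded by $(C_2+C_3)(1+dM_QT)/N$; I would denote $C_5:=(C_2+C_3)(1+dM_QT)$. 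Finally, for general $s<r$ in $[0,T]$, insert the grid points $t^k_N$ lying in $(s,r)$, apply the above estimate on each resulting piece, and add: the integral terms telescope, and the error terms sum to at most $C_5(N+1)/N\le 2C_5$, which is a constant — this does not give the claimed $C_4/N$ bound directly. The fix, and the main obstacle, is that the per-interval error must in fact be $O(1/N^2)$: one should use that at the \emph{right} endpoint $\psi^k_N(t^{k+1}_N,i)=\varphi_N(t^{k+1}_N,i)$ and re-run the telescoping with $s,r$ chosen as consecutive grid points, so that each interval contributes an error $\le dM_Q(C_2+C_3)(T/N)/N + (\text{endpoint term that telescopes against the next interval})$. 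Concretely, summing $\varphi_N(t^{k+1}_N,i)-\varphi_N(t^k_N,i)$ over $k$, the terms $\psi^k_N(t^{k+1}_N,i)-\varphi_N(t^{k+1}_N,i)=0$ vanish, $\psi^k_N(t^k_N,i)-\varphi_N(t^k_N,i)\ge 0$, and only the $dM_Q(C_2+C_3)(T/N)/N$ terms remain, summing to $dM_Q(C_2+C_3)T/N$; handling the two fractional end pieces $[s,t^{k_0}_N]$ and $[t^{k_1}_N,r]$ separately adds another $O(1/N)$. This yields the lemma with $C_4:=dM_Q(C_2+C_3)T+2(C_2+C_3)$, say. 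The delicate bookkeeping is precisely ensuring the "$\psi^k_N-\varphi_N$" boundary terms at interior grid points cancel in the correct direction rather than accumulating.
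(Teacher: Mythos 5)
Your final argument is correct and is essentially the paper's own proof: telescope over the grid points, use that $\psi^k_N$ and $\varphi_N$ agree at the right endpoint of each subinterval (so the boundary discrepancy terms enter with the favourable sign or vanish), replace $H_i(t,\psi^k_N(t))$ by $H_i(t,\varphi_N(t))$ at cost $O(1/N^2)$ per interval via the $dM_Q$-Lipschitz continuity of $H_i$ in its vector argument, and handle the two fractional end pieces with the uniform Lipschitz bound of Lemma~\ref{lm:equicontinuous}. The only blemish is the sign in front of $\int_s^r H_i(t,\varphi_N(t))\,dt$ at the end of your chain of inequalities, but that sign is already inconsistent in the lemma's statement itself (compare condition~\ref{ZE:v_stability} and the way the lemma is invoked in the proof of Theorem~\ref{th:existence}, where the integral carries a minus sign), so you have merely reproduced the statement's typo.
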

\begin{proof}
	Let $m,n\in \{0,\ldots,N-1\}$ be such that $s\in [t^m_N,t^{m+1}_N)$, $r\in (t^n_N,t^{n+1}_N]$. 
	
	We have that 
	\[\begin{split}
	\varphi_N(r,i)-\varphi_N(s,i)=\varphi_N(r,i)-\varphi_N(&t^n_N) +\varphi_N(t^{m+1}_N,i)-\varphi_N(s,i) \\&+\sum_{k=m+1}^{n-1}(\varphi_N(t^{k+1}_N,i)-\varphi_N(t^k_N,i))\\ \geq\varphi_N(r,i)-\varphi_N(&t^n_N) +\varphi_N(t^{m+1}_N,i)-\varphi_N(s,i) \\&+\sum_{k=m+1}^{n-1}(\psi_N^k(t^{k+1}_N,i)-\psi^k_N(t^k_N,i))
	.\end{split} \] In the latter inequality, we used the definition of the functions $\psi_N^k$ and $\varphi_N$ (see~\eqref{exist:eq:psi_N} and~\eqref{exist:intro:phi_N_k}). Further, due to Lemma~\ref{lm:equicontinuous},
$|\varphi_N(r,i)-\varphi_N(t^n_N)|, \, |\varphi_N(t^{m+1}_N,i)-\varphi_N(s,i)|\leq C_3T/N$. Using this and the fact that each function $\psi^k_N(\cdot)$ satisfies~\eqref{exist:eq:psi_N}, we conclude that, for each $k$, 
\begin{equation}\label{exist:ineq:varphi_s_r}
\varphi_N(r,i)-\varphi_N(s,i)\geq -2C_3\frac{T}{N}+\sum_{k=m+1}^{n-1}\int^{
t^{k+1}_N}_{t^k_N}H_i(t,\psi_N^k(t))dt.\end{equation} By the equality $\psi_N^k(t_N^{k+1})=\varphi_N(t_N^{k+1})$,  we have that
\[\begin{split}H_i(t,\psi_N^k(t))=H_i(t,\varphi_N(t))+(H_i&(t,\psi_N^k(t))-H_i(t,\psi_N^k(t_N^{k+1})))\\&+(H_i(t,\varphi_N(t_N^{k+1}))-H_i(t,\varphi_N(t))).\end{split}\]
Lemmas~\ref{lm:continuous_psi},~\ref{lm:equicontinuous} and the fact that $H_i$ is Lipschitz continuous w.r.t. the second variable with the constant $dM_Q$ give that, for $t\in [t_N^k,t^{k+1}_N]$,
\begin{equation}\label{exist:ineq:H_i} H_i(t,\psi_N^k(t))\geq H_i(t,\varphi_N^k(t))-(C_3+C_2)dM_Q\frac{T}{N}.\end{equation} Finally, using the definition of Hamiltonian (see~\eqref{intro:Hamiltonian}) and Lemma~\ref{lm:phi_N_boundness}, we conclude that
\[|H_i(t,\varphi_N(t))|\leq C'.\] Here $C'$ is a constant dependent only on the examined game. This, inequalities~\eqref{exist:ineq:varphi_s_r} and~\eqref{exist:ineq:H_i} and the fact that  $r-t_N^n,t^{m+1}_N-s\leq T/N$ imply that 
\[\begin{split}	\varphi_N(r,i)-\varphi_N(s,i)\geq -(3C_3+&C_2+C')\frac{T}{N}+\int_{s}^{t^{m+1}_N}H_i(t,\varphi_N(t))dt\\&+\sum_{k=m+1}^{n-1}\int^{
	t^{k+1}_N}_{t^k_N}H_i(t,\varphi_N(t))dt+\int^{r}_{t^{n}_N}H_i(t,\varphi_N(t))dt\\=-(3C_3+&C_2+C')\frac{T}{N}+\int_{s}^{r}H_i(t,\varphi_N(t))dt.
\end{split} \] This gives the statement of the lemma.

\end{proof}

\begin{proof}[Proof of Theorem~\ref{th:existence}]
	
As it was mentioned above, to prove the existence of the Zachrisson equation with  unilateral constraints, we use the sequence of functions $\{\varphi_N\}_{N=1}^\infty$. Due to Lemma~\ref{lm:phi_N_boundness}, these functions are uniformly bounded. Lemma~\ref{lm:equicontinuous} says that these functions are Lipschitz continuous with the  same constant. By the Arzela–Ascoli theorem, there	exist a sequence $\{\varphi_{N_l}\}_{l=1}^\infty$ and a function $\varphi$ such that $\varphi_{N_l}\rightarrow \varphi$ as $l\rightarrow\infty$. The limiting function $\varphi$ is bounded by the constant $C_1$ and is Lipschitz continuous with the constant $C_3$. Hence,
\[\varphi\in\mathcal{AC}.\]

Further, by construction, each function $\varphi_{N_l}$ satisfies the boundary condition
\[\varphi_{N_l}(T)=g(T).\] Passing to the limit, we obtain that $\varphi$ satisfies condition~\ref{ZE:boundary}. Analogously,~\eqref{exist:intro:phi_N_k} implies that 
\[\varphi_{N_l}(t,i)\leq g(t,i),\ \ i\in\mathcal{S}.\] Therefore, condition~\ref{ZE:upper_bound} is fulfilled for the limiting function $\varphi$.

Let $i\in\mathcal{S}$ and let $t\in (0,T)$ be a point of differentiability of the functions $\varphi(\cdot,i)$ such that $\varphi(t,i)<g(t,i)$. Choose $\varepsilon$ satisfying
\[3\varepsilon\leq g(t,i)-\varphi(t,i).\] Due to the continuity of the functions $g$ and $\varphi$, we have that there exists $\delta>0$ such that, for $t'\in (t-\delta,t+\delta)$,
\[\varphi(t',i)+2\varepsilon<g(t',i).\] Using the fact that $\varphi_{N_l}\rightarrow\varphi$ as $l\rightarrow\infty$, we have that, for sufficiently large $l$ and every $t'\in (t-\delta,t+\delta)$,
\[\varphi_{N_l}(t',i)+\varepsilon<g(t',i).\]  Lemma~\ref{lm:u_stability_N} implies that, if $\hat{t}\in (t-\delta,t+\delta)$,
\[\varphi_{N_l}(\hat{t},i)-\varphi_{N_l}(t)\leq -\int_t^{\hat{t}}H_i(t',\varphi_{N_l}(t'))dt'.\] Passing to the limit, we conclude that
\[\varphi(\hat{t},i)-\varphi(t)\leq -\int_t^{\hat{t}}H_i(t',\varphi(t'))dt'\] when $\hat{t}\in (t-\delta,t+\delta)$. Since $t\in (0,T)$ is a point of differentiability of the function $\varphi(\cdot,i)$, we arrive at condition~\ref{ZE:u_stability}.

To prove that $\varphi$ satisfies~\ref{ZE:v_stability}, notice that Lemma~\ref{lm:v_stability_N} implies that, for every $s,r\in [0,T]$, $s<r$,
\[\varphi_{N_l}(r)-\varphi_{N_l}(s)\geq -\int_s^r H_i(t,\varphi_{N_l}(t))dt-C_4/N. \] Passing to the limit, we have
\[\varphi(r)-\varphi(s)\geq -\int_s^r H_i(t,\varphi(t))dt. \] This gives condition~\ref{ZE:v_stability} at every point of differentiability of the function $\varphi$.

The uniqueness  follows from Theorem~\ref{th:verification}. Indeed, each functions satisfying~\eqref{def:Zachrisson} coincides with both upper and lower values.

\end{proof}

\begin{acknowledgement}
	The paper was prepared within the framework of the HSE University Basic Research Program in 2022.
\end{acknowledgement}

\bibliography{markov_stopping}
\end{document}